\newtheorem{questionIntro}{Question}
\newtheorem{theorem}{Theorem}[section]
\newtheorem{lemma}[theorem]{Lemma}
\newtheorem{proposition}[theorem]{Proposition}
\newtheorem{definition}[theorem]{Definition}
\newtheorem{corollary}[theorem]{Corollary}
\theoremstyle{remark}
\newtheorem{remark}[theorem]{Remark}
\newcommand{\calP}{\ensuremath{\mathcal{P}}}
\newcommand{\calS}{\ensuremath{\mathcal{S}}}
\newcommand{\calW}{\ensuremath{\mathcal{W}}}
\newcommand{\calX}{\ensuremath{\mathcal{X}}}
\newcommand{\calA}{\ensuremath{\mathcal{A}}}
\newcommand{\calC}{\ensuremath{\mathcal{C}}}
\newcommand{\mo}{{-1}}
\newcommand{\bbZ}{\ensuremath{\mathbb{Z}}}
\newcommand{\bbQ}{\ensuremath{\mathbb{Q}}}
\newcommand{\cyc}{\ensuremath{\mathrm{cyc}}}
\newcommand{\ab}{\ensuremath{\mathrm{ab}}}
\newcommand{\sol}{\ensuremath{\mathrm{sol}}}
\newcommand{\ssol}{\ensuremath{\mathrm{ssol}}}
\newcommand{\nil}{\ensuremath{\mathrm{nil}}}
\begin{document}

\title{Asymptotic behaviour of minimal complements }
 
\author{Arindam Biswas}
\address{Department of Mathematics, Technion - Israel Institute of Technology, Haifa 32000, Israel}
\curraddr{}
\email{biswas@campus.technion.ac.il}
\thanks{}

\author{Jyoti Prakash Saha}
\address{Department of Mathematics, Indian Institute of Science Education and Research Bhopal, Bhopal Bypass Road, Bhauri, Bhopal 462066, Madhya Pradesh,
India}
\curraddr{}
\email{jpsaha@iiserb.ac.in}
\thanks{}

\subjclass[2010]{11B13, 05E15, 05B10, 11P70}

\keywords{Additive complements, minimal complements, sumsets, additive number theory}

\begin{abstract}
The notion of minimal complements was introduced by Nathanson in 2011 as a natural group-theoretic analogue of the metric concept of nets. Given two non-empty subsets $W,W'$ in a group $G$, the set $W'$ is said to be a complement to $W$ if $W\cdot W'=G$ and it is minimal if no proper subset of $W'$ is a complement to $W$. The inverse problem asks which sets may or not occur as minimal complements. We show some new results on the inverse problem and investigate how the study of the inverse problem naturally gives rise to questions about the asymptotic behaviour of these sets, providing partial answers to some of them. 
\end{abstract}

\maketitle

 
\section{Introduction}
\subsection{Motivation}
 Let $A, B$ be non-empty subsets in a group $G$. The set $A$ is said to be a left (resp. right) complement to $B$ if $A \cdot B = G$ (resp. $B\cdot A = G$). The set $A$ is a minimal left complement to $B$ if $$A \cdot B = G \textnormal{ and } (A\setminus \lbrace a\rbrace )\cdot B \subsetneq G \text{ for any } a\in A.$$ 
The minimal right complements are defined analogously. In the literature, complements (as defined above) are also known as additive or multiplicative complements (depending on the group structure)  to distinguish them from set-theoretic complements, but in this article we shall use the term complement or minimal complement to mean the above. 
 
 The study of minimal complements began with Nathanson in \cite{NathansonAddNT4}, who introduced the notion in the context of additive number theory and geometric group theory as an analogue of the metric concept of nets adapted to groups. Indeed, (group-theoretic) nets and minimal nets are related with complements and minimal complements. See \cite[Lemma 2]{NathansonAddNT4}. In the same article, he posed certain questions regarding the classification of sets which admit minimal complements. We shall refer to these problems as the direct problems. Works on the direct problems include those of Nathanson \cite{NathansonAddNT4}, Chen--Yang \cite{ChenYang12}, Kiss--S\'{a}ndor--Yang \cite{KissSandorYangJCT19}, the authors \cite{MinComp1}, \cite{MinComp2} etc. Recently, the study of sets which may or may not occur as minimal complements has also become popular. We shall refer to them as the inverse problems (a term coined by Alon--Kravitz--Larson). Works mainly on the inverse problems include those of Kwon \cite{Kwon}, Alon--Kravitz--Larson \cite{AlonKravitzLarson}, Burcroff--Luntzlara \cite{BurcroffLuntzlara}, the authors  \cite{CoMin1,CoMin2, CoMin3, CoMin4} etc. (in fact, \cite{CoMin2}, \cite{CoMin3} deal with co-minimal pairs (see Definition \ref{Def:CoMin}) and hence are concerned with both the direct and the inverse problems). It is often the case that the inverse problems are harder to answer, e.g., even if we restrict to finite groups, it is easy to see that any non-empty subset admits a minimal complement, but the corresponding inverse problem, asking whether any non-empty subset occurs as a minimal complement or not, has a negative answer. This forms the basis of our investigation on the asymptotic behaviour of sets which occur as minimal complements. 

\subsection{Results obtained}
In the context of groups of several kinds, it follows from the works of Alon--Kravitz--Larson (see Proposition \ref{Prop:23rdBdd}), Burcroff--Luntzlara \cite[Lemma 5]{BurcroffLuntzlara} and that of the authors \cite[Theorem C]{CoMin1}, \cite[Corollary 2.9]{CoMin1} (see also \cite{CoMin4})  that ``large" subsets cannot occur as minimal complements. 
On the other hand, the recent works of Kwon \cite[Theorem 9]{Kwon}, Alon--Kravitz--Larson (Theorem \ref{Thm:SmallWorks}) and the authors \cite[Theorem B]{CoMin1} show that the ``small'' subsets of several groups are minimal complements. However, it is not established that any nonempty finite subset of any infinite group is a minimal complement (to the best of knowledge of the authors). Using the ideas of the proof of \cite[Theorem 16]{AlonKravitzLarson}, we prove Theorem \ref{Thm:SmallInNon-abelian}, which implies that it is indeed the case (see Corollary \ref{Cor:SmallInNon-abelian}). 

\begin{theorem}\label{Thm:SmallInNon-abelian}
If $C$ is a nonempty finite subset of a group $G$ such that $|G| > |C|^5 - |C|^4$, then $C$ is a minimal complement in $G$. 
\end{theorem}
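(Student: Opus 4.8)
The plan is to construct explicitly a subset $B\subseteq G$ witnessing that $C=\{c_1,\dots,c_n\}$ (with $n=|C|$) is a minimal complement, i.e.\ $C\cdot B=G$ while $(C\setminus\{c_i\})\cdot B\subsetneq G$ for every $i$. One may assume $n\ge 2$, the case $n=1$ being immediate (take $B=G$). Following the strategy behind \cite[Theorem 16]{AlonKravitzLarson}, the set $B$ will be built in two layers: a layer $W_0=\{s_1,\dots,s_n\}$ of $n$ carefully chosen ``witnesses'', producing for each $i$ an element $g_i:=c_is_i$ that no translate $c_jW_0$ with $j\ne i$ can hit, together with a filler layer $F$, added afterwards, that turns $B$ into a complement of $C$ without spoiling the isolation of the $g_i$'s.

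First I would introduce, for each ordered pair $i\ne k$ in $\{1,\dots,n\}$, the multiplier set
\[\Delta_{ik}=\{\,c_i^{-1}c_{a}c_{b}^{-1}c_{a'}c_{b'}^{-1}c_k : a,b,a',b'\in\{1,\dots,n\}\,\},\]
which has at most $n^4$ elements and satisfies $\Delta_{ki}=\Delta_{ik}^{-1}$. Then I would choose $s_1,\dots,s_n\in G$ greedily so that $s_i\notin\bigcup_{k<i}\Delta_{ik}s_k$: at the $i$-th step this forbids at most $(i-1)n^4\le(n-1)n^4=|C|^5-|C|^4<|G|$ elements, so a valid choice exists. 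This is the one and only place the hypothesis is used, and it is what pins down the exponent $5$. By the relation $\Delta_{ki}=\Delta_{ik}^{-1}$ one automatically gets $s_i\notin\Delta_{ik}s_k$ for \emph{all} $k\ne i$.

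Next I would extract the consequences. Put $g_i=c_is_i$ and $Z=\{\,c_j^{-1}c_is_i : 1\le i\le n,\ j\ne i\,\}$, so $|Z|\le n(n-1)$. A routine check, rewriting each offending equality in the form $s_i=\delta s_k$ with $\delta\in\Delta_{ik}$, gives (i) $g_i\notin(C\setminus\{c_i\})W_0$ for every $i$, and (ii) $W_0\cap Z=\emptyset$. The crucial claim is that $\bigcap_{c\in C}cZ=\emptyset$, i.e.\ no $u\in G$ has $C^{-1}u\subseteq Z$; this is what makes the second layer possible, and I expect it to be the main obstacle. To prove it, suppose such a $u$ existed and, for each $m$, write $c_m^{-1}u=c_{\beta_m}^{-1}c_{\alpha_m}s_{\alpha_m}$ with $\beta_m\ne\alpha_m$, so that $u=c_mc_{\beta_m}^{-1}c_{\alpha_m}s_{\alpha_m}$. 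If all $\alpha_m$ equal a common $\alpha$, then $c_mc_{\beta_m}^{-1}$ is independent of $m$, hence $m\mapsto\beta_m$ is injective and therefore a bijection of $\{1,\dots,n\}$ — absurd, since $\beta_m\ne\alpha$ for all $m$. Otherwise $\alpha_i\ne\alpha_{i'}$ for some $i,i'$, and equating the two expressions for $u$ yields $s_{\alpha_i}=\mu\,s_{\alpha_{i'}}$ with $\mu=c_{\alpha_i}^{-1}c_{\beta_i}c_i^{-1}c_{i'}c_{\beta_{i'}}^{-1}c_{\alpha_{i'}}\in\Delta_{\alpha_i,\alpha_{i'}}$, contradicting the choice of the $s$'s. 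Note that the ``uniform'' case is killed purely combinatorially (no genericity of the $s$'s is needed there), while the ``non-uniform'' case is exactly where the degree-four multiplier sets enter.

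Finally, granted $\bigcap_{c\in C}cZ=\emptyset$, for every $u\in U:=G\setminus CW_0$ there is $c\in C$ with $c^{-1}u\notin Z$; pick one such $f_u:=c^{-1}u$ for each $u\in U$, and set $F=\{f_u:u\in U\}$ and $B=W_0\cup F$. Then $CB\supseteq CW_0\cup U=G$, and $B\cap Z=\emptyset$ by (ii) together with $F\cap Z=\emptyset$. For each $i$ the element $g_i$ lies in $c_iB\subseteq CB$, but by (i) it avoids $(C\setminus\{c_i\})W_0$, and since $\{c_j^{-1}g_i:j\ne i\}\subseteq Z$ is disjoint from $F$ it also avoids $(C\setminus\{c_i\})F$; hence $g_i\notin(C\setminus\{c_i\})B$, so $(C\setminus\{c_i\})B\subsetneq G$. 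Thus $C$ is a minimal (left) complement to $B$, the right-handed statement being symmetric.
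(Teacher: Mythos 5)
Your proof is correct and follows essentially the same route as the paper's (the second part of Theorem \ref{Thm:MinCompTwoSidedTra}): a greedy choice of $|C|$ witnesses, each avoiding at most $(|C|-1)\,|C|^4=|C|^5-|C|^4$ forbidden elements coming from degree-four multiplier sets, forces the translates $CC^{-1}c_is_i$ to be pairwise disjoint, after which the complement is completed by elements lying outside the danger set $Z$. The remaining differences are cosmetic: you work with left rather than right complements, you take a minimal filler (one element per uncovered point) where the paper takes the maximal one, namely $G\setminus\bigl(\bigcup_i w_ic_iC^{-1}\bigr)$, and you isolate the covering step as the explicit claim $\bigcap_{c\in C}cZ=\emptyset$, which the paper proves in-line while verifying $W\cdot C=G$.
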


\begin{corollary}\label{Cor:SmallInNon-abelian}
Any nonempty finite subset of any infinite group is a minimal complement. 
\end{corollary}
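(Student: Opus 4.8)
Write $C=\{c_1,\dots,c_n\}$ with $n=|C|$; if $n=1$ then $C$ is a minimal complement to $W=G$, so assume $n\geq 2$. The plan is to produce a complement $W$ of $C$ together with $n$ ``witnesses'' $y_1,\dots,y_n\in G$ certifying that no $c_i$ may be deleted. Set
\[
F:=\bigl\{\,c_j^{-1}y_a \ :\ a,j\in\{1,\dots,n\},\ a\neq j\,\bigr\},
\]
a finite set of size at most $n^2-n$. The key observation is that \emph{if} $CW=G$ and $W\cap F=\varnothing$, \emph{then} $C$ is a minimal complement to $W$: the first condition is the complement property, and for each $a$ the element $y_a$ cannot lie in $c_jW$ for any $j\neq a$, since that would force $c_j^{-1}y_a\in W\cap F=\varnothing$; hence $y_a\notin(C\setminus\{c_a\})W$, so $(C\setminus\{c_a\})W\subsetneq G$. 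Thus it suffices to choose $y_1,\dots,y_n$ so that the ``uncoverable'' set
\[
\mathrm{Bad}:=\{\,g\in G\ :\ c^{-1}g\in F\text{ for every }c\in C\,\}
\]
is empty: once $\mathrm{Bad}=\varnothing$, a suitable $W$ is built by a greedy recursion over a well-ordering of $G$ (a finite loop if $G$ is finite), adding to $W$, for each not-yet-covered $g$, some $c^{-1}g\notin F$ — possible precisely because $g\notin\mathrm{Bad}$ — and taking unions at limit stages.

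The core step is therefore to choose $y_1,\dots,y_n$ with $\mathrm{Bad}=\varnothing$. I would impose the following ``general position'' requirement: for all $a\neq b$ in $\{1,\dots,n\}$ and all $l,p,q\in\{1,\dots,n\}$,
\[
c_l\,c_p^{-1}y_a\ \neq\ c_1\,c_q^{-1}y_b .
\]
Granting this, suppose for contradiction that $g\in\mathrm{Bad}$. Then for every $l$ there are indices $i_l\neq j_l$ with $g=c_l c_{j_l}^{-1}y_{i_l}$. Comparing, for each $l$, this representation with the one for $l=1$ and invoking the general position requirement forces $i_l=i_1=:i$ for all $l$. Cancelling $y_i$ then shows $c_l c_{j_l}^{-1}$ is independent of $l$, say equal to $s$; hence $c_{j_l}=s^{-1}c_l$ for all $l$, so $s^{-1}C\subseteq C$ and, $C$ being finite, $s^{-1}C=C$. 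Therefore $l\mapsto j_l$ is a bijection of $\{1,\dots,n\}$; but $j_l\neq i$ for every $l$, contradicting surjectivity. Hence $\mathrm{Bad}=\varnothing$.

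It remains to see that the general position requirement can be met when $|G|>n^5-n^4$. It is a conjunction of at most $n(n-1)\cdot n^3=n^5-n^4$ conditions, each of the form ``$y_b\neq(\text{a fixed element})\cdot y_a$'' for some $a\neq b$. Choosing $y_1,\dots,y_n$ one at a time, the choice of $y_m$ need only avoid the finitely many values $(\text{fixed element})\cdot y_i$ with $i<m$ coming from the conditions in which $m$ is the larger of the two indices; a short count bounds the number of such forbidden values by $2(n-1)n^3$, which is $\leq n^5-n^4$. So the hypothesis $|G|>n^5-n^4$ (in particular, $|G|=\infty$) leaves enough room to pick each $y_m$, which proves Theorem~\ref{Thm:SmallInNon-abelian}; the analogous statement for minimal right complements follows by passing to inverses. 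Corollary~\ref{Cor:SmallInNon-abelian} is then immediate, since every infinite group $G$ satisfies $|G|>|C|^5-|C|^4$ for each nonempty finite subset $C$.

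The step I expect to be most delicate is isolating the right general position requirement in the second paragraph: it must be strong enough to drive the bijection argument forcing $\mathrm{Bad}=\varnothing$, yet cheap enough that its number of constraints stays within the budget $|C|^5-|C|^4$. The crucial economy is that one compares the $n$ representations $g=c_l c_{j_l}^{-1}y_{i_l}$ of a hypothetical $g\in\mathrm{Bad}$ only against the single representation with $l=1$, rather than against one another; this keeps the count at $|C|^4(|C|-1)$ rather than something larger.
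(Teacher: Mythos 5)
Your proof is correct, and it reaches the corollary by the same broad strategy the paper uses (the paper routes it through Theorem~\ref{Thm:MinCompTwoSidedTra}, modelled on the proof of Theorem~16 of Alon--Kravitz--Larson): fix one minimality witness per element of $C$, force the witnesses into ``general position'' by a union bound costing at most $|C|^5-|C|^4$ elements of $G$, and then complete a complement $W$ that avoids the finite forbidden set $\{c_j^{-1}y_a : j\neq a\}$. The implementations differ in two genuine ways. First, the paper's general-position condition is that the sets $w_i c_i C^{-1}C$ (so $y_i=w_ic_i$ in your notation) be pairwise disjoint, after which $W$ is written down explicitly as $\{w_1,\dots,w_k\}\cup\bigl(G\setminus\bigcup_i w_ic_iC^{-1}\bigr)$ and $W\cdot C=G$ is verified by a case check using that $|zC^{-1}|=|w_ic_iC^{-1}|$ forces containment to be equality; you instead impose the different family of inequalities $c_lc_p^{-1}y_a\neq c_1c_q^{-1}y_b$ ($a\neq b$), prove coverability ($\mathrm{Bad}=\varnothing$) by the bijection argument, and then build $W$ greedily by transfinite recursion. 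Second, your step ``$s^{-1}C\subseteq C$ and $C$ finite imply $s^{-1}C=C$'' plays exactly the role that the paper's translate-equality dichotomy plays, and it is where finiteness of $C$ enters for you. Both routes land on the same threshold $|G|>|C|^5-|C|^4$ (your count $|C|^4(|C|-1)$ of constraints versus the paper's $\sum_{i<s}|A_iA_s^{-1}|\leq(|C|-1)|C|^4$), both handle left and right complements symmetrically, and both specialize immediately to the corollary since an infinite group trivially clears the threshold; your argument is sound, the only extra care being the transfinite greedy construction, which you handle correctly.
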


The above corollary generalizes \cite[Theorem 9]{Kwon}, \cite[Theorem B]{CoMin1}, \cite[Theorem 2]{AlonKravitzLarson}. 
Moreover, increasing our understanding of which sets occur as minimal complements, the following result is also shown in section \ref{Sec:MinComp}.

\begin{theorem}\label{Thm:UnionOfCosetsInZ^d}
Let $d, n, k$ be a positive integers such that 
$$k \leq \frac{n^{d/3}} {2(d \log_2 n)^{2/3}}.$$
Let $\calX_1, \cdots, \calX_k$ be subsets of $\bbZ^d$ which are minimal complements in $\bbZ^d$. Let $c_1, \cdots, c_k$ be elements of $\bbZ^d$ which are pairwise distinct modulo $n\bbZ^d$. Then 
$\cup_{1 \leq i \leq k} (c_i + n\calX_i)$ 
is a minimal complement in $\bbZ^d$. 
Moreover, if any nonempty subset of $\bbZ^d$ having finite symmetric difference with any one of $\calX_1, \cdots, \calX_k$ is a minimal complement in $\bbZ^d$, then any nonempty subset of $\cup_{1\leq i \leq k} (c_i + n\bbZ^d)$ having finite symmetric difference with 
$\cup_{1 \leq i \leq k} (c_i + n\calX_i)$ 
is a minimal complement in $\bbZ^d$. 
\end{theorem}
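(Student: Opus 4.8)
The plan is to reduce the statement to the single-coset case and then glue, exploiting the ``dilated lattice'' structure of $\bigcup_i (c_i + n\bbZ^d)$. First I would fix notation: write $H = n\bbZ^d$, a finite-index subgroup of $G = \bbZ^d$, so that $c_1 + H, \dots, c_k + H$ are $k$ pairwise distinct cosets of $H$, and set $W = \bigcup_{i} (c_i + n\calX_i) \subseteq \bigcup_i (c_i + H)$. The key structural observation is that the dilation map $x \mapsto nx$ is a group isomorphism $\bbZ^d \xrightarrow{\sim} H$, so each $\calX_i$ being a minimal complement in $\bbZ^d$ is equivalent to $n\calX_i$ being a minimal complement \emph{of $H$ inside $H$} (with respect to the group law of $H$). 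The first step is to produce a complement: I would take an auxiliary finite set $T \subseteq G$ that is a complement to the set $\{c_1, \dots, c_k\}$ modulo $H$ — i.e. $T$ hits every coset of $H$ in such a way that $\{c_1,\dots,c_k\} + T + H = G$ — and combine it with complements $\calY_i$ to each $n\calX_i$ in $H$, arranging that $W + \big(\bigcup_i \calY_i + T'\big) = G$ for a suitable translate bookkeeping $T'$. The hypothesis $k \leq n^{d/3}/(2(d\log_2 n)^{2/3})$ is exactly the kind of bound that guarantees (via a counting/pigeonhole argument of the Alon--Kravitz--Larson flavour cited earlier as Proposition~\ref{Prop:23rdBdd} and Theorem~\ref{Thm:SmallWorks}) that such a $T$ exists with small enough size that the minimality is not destroyed.

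The heart of the argument is minimality. I would argue that no element of $W$ can be removed. Fix $w \in W$, say $w = c_i + n x$ with $x \in \calX_i$. Because the $c_i$ are in distinct cosets mod $H$, the decomposition of $G$ as $\bigsqcup_j (c_j + H)$ lets me track, for each target coset, exactly which pairs $(c_j + n y, h')$ (with $y \in \calX_j$) can possibly sum into it; this is the point where the coset-separation hypothesis does the real work, since it prevents ``cross-talk'' between different $\calX_j$'s at the level of which coset of $H$ is being covered. Concretely, translating everything by $-c_i$ and applying the inverse dilation, the failure of $W \setminus \{w\}$ to be a complement should reduce to the failure of $\calX_i \setminus \{x\}$ to be a complement of $\bbZ^d$ — which holds by hypothesis — modulo the contribution of the other cosets, which I must show cannot ``cover for'' the missing element $w$. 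This is where I expect the main obstacle: ensuring the complement $\calY := \bigcup_i \calY_i + T'$ is built so that the elements it contributes, when paired with $w$, land in a set of cosets disjoint from those reachable by pairing $\calY$ with $W \setminus \{w\}$ restricted to the $i$-th coset — i.e. I need the complement's ``coset profile'' to be compatible with the block structure. A clean way to do this is to choose $\calY$ itself to be a union $\bigcup_j (d_j + n\calY_j')$ respecting the same coset decomposition, reducing the whole problem coset-by-coset to the one-dimensional-in-$j$ statement; the size constraint on $k$ is what lets such a coset-respecting $\calY$ be chosen.

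For the ``Moreover'' clause, the argument is the same but one replaces each $\calX_i$ by an arbitrary $\calX_i' $ with $|\calX_i \triangle \calX_i'| < \infty$ that is assumed to be a minimal complement, and correspondingly replaces $W$ by $W' = \bigcup_i (c_i + n\calX_i')$; since $W \triangle W' = \bigcup_i (c_i + n(\calX_i \triangle \calX_i'))$ is finite and again respects the coset decomposition, every step above goes through verbatim — the complement $\calY$ need only be enlarged by a finite amount to absorb the finitely many new/removed points, which does not affect minimality of $\calX_i'$ by hypothesis. I would close by remarking that the essential inputs are (i) the isomorphism $\bbZ^d \cong n\bbZ^d$ via dilation, (ii) the disjoint-coset decomposition, and (iii) the quantitative small-set results invoked earlier, with the bound on $k$ being precisely the threshold below which a coset-respecting complement of controlled size exists.
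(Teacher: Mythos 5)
Your high-level reduction is the right one and coincides with the paper's: pass to the subgroup $H=n\bbZ^d$, whose index in $\bbZ^d$ is $n^d$, note that dilation by $n$ identifies each $\calX_i$ with a minimal complement of $H$ sitting inside the coset $c_i+H$, and observe that the hypothesis on $k$ is exactly the Alon--Kravitz--Larson threshold $N^{1/3}/(2(\log_2N)^{2/3})$ for the quotient group of order $N=n^d$. The paper packages this as a general statement for a normal subgroup of finite index (Theorem \ref{Thm:UnionOfCosets}) and deduces the theorem from it. The ``Moreover'' clause is indeed immediate once the main clause is proved, by rerunning the argument on the perturbed sets, as you indicate.

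The gap is in the step you yourself flag as ``the main obstacle'': you never supply a mechanism that makes the gluing minimal, and the coset-respecting complement $\calY=\bigcup_j(d_j+n\calY_j')$ you propose does not obviously exist. Since $\{\overline c_i\}+\{\overline d_j\}$ must cover the quotient, each block $\calY_j'$ pairs with several (generically all) of the $\calX_i$; so one would need a single set $\calY_j'$ with $\calX_i+\calY_j'=\bbZ^d$ for all relevant $i$ while every point of every $\calX_i$ stays necessary somewhere. Nothing in the hypotheses provides such a common witness, and the bound on $k$ does not produce one --- that bound is not about the existence of a small transversal $T$ to $\{c_1,\dots,c_k\}$ mod $H$ (such a $T$ always exists; its size is not the issue). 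What the paper actually uses is Proposition \ref{Prop:ExistenceOfSKTuple}: for a suitable $s$ guaranteed by the bound on $k$, one finds translates $w_1,\dots,w_k$ in the quotient satisfying three separation conditions (no $\overline w_i\overline c_i$ lies in $\overline w_j\overline c_j\cdot\overline C$; few of the sets $\overline w_i\overline c_i\overline C^{-1}\overline C$ meet; and a condition controlling partial overlaps of $\overline w_i\overline c_i\overline C^{-1}$ with translates $\overline z\,\overline C^{-1}$), and the complement is then taken to be the union of the translated per-coset witnesses $w_ic_i\calW_ic_i^{-1}$ together with the large residual set $G\setminus\bigcup_i w_ic_iC^{-1}$. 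That residual set is emphatically not coset-respecting, and it, together with the three conditions, is what kills the cross-talk between different $\calX_j$'s. Without this (or an equivalent) ingredient your argument does not close.
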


Let $G$ be a finite group of order $n$ and consider the collection $\mathcal{C}$ of all non-empty subsets of $G$ which occur as minimal complements. There are several immediate questions about the elements of  $\mathcal{C}$, e.g., what are the sizes of the elements of $\mathcal{C}$, what are the asymptotic properties of the sizes as $n\rightarrow \infty$, what are the integers $k$ between $1$ and $n$ such that any subset (or some subset) of $G$ of size $k$ is a minimal complement? 
In a prior work of the authors, some such questions were asked \cite[Question 1]{CoMin1}. 
One can also study these questions by restricting to particular classes of groups, e.g., in the context of cyclic groups, or abelian groups, or non-abelian groups. We shall investigate these questions and provide partial answers to some of them in section \ref{Sec:Limit}. In section \ref{Sec:AsyCoMin}, we shall study the above questions for co-minimal pairs (see Definition \ref{Def:CoMin}). 

\section{Background literature} 
\label{Sec:BackLit}
To study the asymptotic behaviour of minimal complements and that of co-minimal pairs we shall repeatedly use some previous results. Most of them are very recent and not yet in widespread use, so it is worthwhile to collect them here.

\begin{theorem}
[{\cite[Theorem 1]{AlonKravitzLarson}}]\label{Thm:SmallWorks}
Let $G$ be a group of order $n\geq 2$. If $C$ is a nonempty subset of $G$ of size 
$$\leq \frac{n^{1/3}}{2(\log_2 n)^{2/3}},$$
then $C$ is a minimal left complement to some subset in $G$ and it is a minimal right complement to some subset in $G$. 
\end{theorem}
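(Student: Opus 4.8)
\emph{Proof strategy.} Write $C=\{c_1,\dots,c_k\}$ with $k=|C|$; since $c_1^{-1}C$ is a minimal left complement to a subset $W$ exactly when $C$ is, we may assume $e\in C$. I discuss the left-complement assertion, the right one being identical after reversing all one-sided products. The starting point is a reformulation: a subset $W$ with $CW=G$ has $C$ as a \emph{minimal} left complement if and only if for every $i$ there is a witness $g_i\in CW\setminus(C\setminus\{c_i\})W$; such a $g_i$ must equal $c_iw_i$ for some $w_i\in W$, and the requirement $g_i\notin(C\setminus\{c_i\})W$ is exactly $D_iw_i\cap W=\emptyset$, where $D_i:=\{c_j^{-1}c_i:j\ne i\}$ has $k-1$ elements and avoids $e$. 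So it suffices to find $w_1,\dots,w_k\in G$ such that, with $F:=\bigcup_{i=1}^k D_iw_i$ (a set of at most $k(k-1)$ elements), we have (a) $w_i\notin F$ for all $i$, and (b) $C^{-1}g\not\subseteq F$ for all $g\in G$. Indeed then $W:=G\setminus F$ satisfies $CW=G$ by (b), while (a) together with the inclusion $D_iw_i\subseteq F$ shows that $g_i:=c_iw_i$ certifies that $c_i$ cannot be deleted.

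\emph{Producing the witnesses.} I would choose $w_1,\dots,w_k$ independently and uniformly at random from $G$ and bound the two bad events. Failure of (a) requires $w_i\in D_{i'}w_{i'}$ for some $i'\ne i$ (the case $i'=i$ being impossible since $e\notin D_i$), which has probability at most $(k-1)/n$ per ordered pair, hence probability $O(k^3/n)$ overall. Failure of (b) is the real work: by left-translation invariance $\Pr[\,C^{-1}g\subseteq F\,]$ is independent of $g$, so one wants $n\cdot\Pr[\,C^{-1}\subseteq F\,]$ small, i.e.\ that the $k$ random translates $D_1w_1,\dots,D_kw_k$ are unlikely to jointly cover the fixed $k$-set $C^{-1}$. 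Since $\mathbb{E}\,|D_iw_i\cap C^{-1}|=(k-1)k/n$ is tiny, these translates typically meet $C^{-1}$ in at most one point, the covering events are nearly independent, and one gets a bound of roughly $(k^2/n)^k$ --- ample room for $k$ a power of $n$ below $n^{1/2}$. The loss of this estimate, and the source of the $\log$ factor, comes from configurations where $C$ is close to a coset of a subgroup $H$: then all the $D_i$ nearly coincide with $H\setminus\{e\}$, a single $w_i$ can cover all but one point of a translate of $C^{-1}$, and the union bound over the $n$ elements $g$ must be replaced by one over the $n/|H|$ cosets, where $C^{-1}g\subseteq F$ is seen to force two of the $w_i$ into a common coset --- again an $O(k^3/n)$ event by the birthday bound, possibly after re-choosing a bounded number of offending $w_i$.

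\emph{Main obstacle.} The crux is therefore the bound for (b), handled uniformly across the full range of ``coset-likeness'' of $C$: one needs a single counting argument interpolating between the generic case (independence gives a very strong bound) and the near-coset case (one counts cosets, not points, and exploits that a translate of $D_i$ can engulf a near-translate of $C^{-1}$). Tracking how many of the $k$ points of $C^{-1}$ one translate $D_iw_i$ can be responsible for, and optimizing the resulting estimate against the union bound, is what yields the exponent $1/3$ with the $(\log_2 n)^{2/3}$ correction, i.e.\ the hypothesis $|C|\le n^{1/3}/(2(\log_2 n)^{2/3})$. I expect this optimization --- together with arranging the ``repair'' in the near-coset case so that it creates no fresh bad translates --- to be the technical heart of the argument; the reduction in the first paragraph and the failure mode (a) are routine.
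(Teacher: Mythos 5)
The paper itself offers no proof of this statement: it is imported verbatim from Alon--Kravitz--Larson, with the remark that their probabilistic argument goes through for nonabelian $G$. Measured against that, your first paragraph is correct and is exactly the right reduction: with $F=\bigcup_{i}D_iw_i$ and $W=G\setminus F$, conditions (a) and (b) do certify that $C$ is a minimal left complement to $W$ (condition (b) gives $CW=G$ since $e\in C$, and (a) plus $D_iw_i\subseteq F$ shows $c_iw_i\in CW\setminus(C\setminus\{c_i\})W$), and the $O(k^3/n)$ bound for the failure of (a) is the easy birthday estimate, matching condition (1) of Proposition \ref{Prop:ExistenceOfSKTuple}.

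The genuine gap is exactly where you say ``the real work'' is: no inequality is actually derived for the failure probability of (b) that is valid uniformly over all $C$ and that produces the threshold $k\le n^{1/3}/(2(\log_2 n)^{2/3})$. The heuristic $(k^2/n)^k$ for the generic case and the verbal description of the near-coset regime are not a proof; the whole difficulty of the theorem is the interpolation between these two regimes, which in the source (and in this paper's repackaging as Proposition \ref{Prop:ExistenceOfSKTuple}) is carried out by introducing an auxiliary integer $s$, bounding separately the events that some translate $w_ic_iC^{-1}$ meets $zC^{-1}$ in more than $k/s$ points, that $s$ of the translates pairwise interact, and that the ``first uncovered element'' of $zC^{-1}$ is picked up by another translate, under the numerical hypothesis $\frac{s^2k^3}{n}+\frac{e^sk^{3s}}{n^{s-1}}+k\bigl(\frac{s^2k^3}{n}\bigr)^s<1$; optimizing over $s$ is what yields the exponent $1/3$ and the $(\log_2 n)^{2/3}$ factor. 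None of this is supplied. Moreover, your proposed repair in the near-coset case --- ``re-choosing a bounded number of offending $w_i$'' --- is not a valid probabilistic step as stated: resampling conditioned on a bad event destroys the independence used in (a) and can create fresh covered translates, and you give no mechanism (such as the $s$ independent candidate choices $w_i^{(1)},\dots,w_i^{(s)}$ per row used in Proposition \ref{Prop:ExistenceOfSKTuple}, from which one column entry per row is selected \emph{after} the bad events are excluded) to control this. So the skeleton and the reduction are right, but the quantitative heart of the argument is missing.
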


\begin{proposition}
[{\cite[Proposition 13]{AlonKravitzLarson}}]
\label{Prop:23rdBdd}
Let $G$ be a finite group. If a subset $C$ of $G$ is a minimal complement to some subset $W$, 
then 
$$|C| 
\leq |G| \frac{|W|} {2|W| -1}.$$
\end{proposition}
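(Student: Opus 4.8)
The plan is to count, with multiplicity, the representations of elements of $G$ as products $cw$ with $c\in C$ and $w\in W$. Without loss of generality assume that $C$ is a minimal left complement to $W$, so that $C\cdot W=G$. For $g\in G$ set
$$m(g):=\#\{\,c\in C : c^{-1}g\in W\,\}=\#\{\,(c,w)\in C\times W : cw=g\,\}.$$
Two elementary facts will drive the argument. First, summing over $g$ and interchanging the order of summation, $\sum_{g\in G}m(g)=|C|\cdot|W|$. Second, for each fixed $g$ the map $c\mapsto c^{-1}g$ is injective on $\{\,c\in C:c^{-1}g\in W\,\}$ and takes values in $W$, so $m(g)\le|W|$ for every $g\in G$.

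Next I would extract the consequence of minimality. For each $c\in C$ the set $(C\setminus\{c\})\cdot W$ is a proper subset of $G$, so, since $C\cdot W=G$, there is an element $g_c$ lying in $c\cdot W$ but in no other translate $c'\cdot W$; equivalently $g_c\in cW$ and $m(g_c)=1$. If $c\ne c'$ then $g_c\ne g_{c'}$, since otherwise this common element would be covered both by $c$ and by $c'$, contradicting $m(g_c)=1$. Hence, writing $a:=\#\{\,g\in G:m(g)=1\,\}$, we have $a\ge|C|$.

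Now I would combine the two estimates. If $|W|=1$ the asserted bound reads $|C|\le|G|$, which is automatic, so assume $|W|\ge2$. Splitting the identity $\sum_{g}m(g)=|C|\,|W|$ according to whether $m(g)=1$ or $m(g)\ge2$, and using $m(g)\le|W|$ on the second part, gives
$$|C|\,|W|=\sum_{g:\,m(g)=1}1+\sum_{g:\,m(g)\ge2}m(g)\le a+|W|\bigl(|G|-a\bigr)=|W|\,|G|-(|W|-1)\,a.$$
Substituting $a\ge|C|$ and rearranging yields $|C|\,|W|+(|W|-1)\,|C|\le|W|\,|G|$, that is $|C|\,(2|W|-1)\le|W|\,|G|$, which is exactly the claimed inequality $|C|\le|G|\,\frac{|W|}{2|W|-1}$.

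As for difficulty: there is essentially no serious obstacle once one fixes on the multiplicity function $m$ as the right object to count. The only genuine ideas are the two complementary bounds it satisfies — the pointwise bound $m(g)\le|W|$ coming from injectivity of $c\mapsto c^{-1}g$, and the bound $\#\{g:m(g)=1\}\ge|C|$ coming from minimality — and everything else is bookkeeping. The one minor point to keep track of is the degenerate case $|W|=1$, where the division by $|W|-1$ would be illegitimate but the conclusion is trivial in any case.
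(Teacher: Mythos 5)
Your proof is correct, and it is essentially the standard double-counting argument of Alon--Kravitz--Larson's Proposition 13, which the paper cites without reproducing: the two key bounds (at least $|C|$ uniquely represented elements from minimality, and multiplicity at most $|W|$ elsewhere) are exactly the ingredients of their proof. Your writing the argument multiplicatively also confirms the paper's remark that the result holds without the abelian hypothesis.
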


Alon, Kravitz and Larson showed the above results in the context of abelian groups, but it can be seen that their proof extends to the setting when $G$ is not assumed to be abelian (by replacing the sums of the form $a+b$ (resp. $a-b$) by $a\cdot b$ (resp. $a\cdot b^\mo$) and the sets of the form $a+B$ (resp. $a-B$) by $a\cdot B$ (resp. $a\cdot B^\mo$). Recently, Burcroff and Luntzlara have proved a result which is more general than 
Proposition \ref{Prop:23rdBdd} in the context of abelian groups 
\cite[Lemma 5]{BurcroffLuntzlara}.

\begin{theorem}
[{\cite[Theorem B]{CoMin1}}]
Given any nonempty subset $S$ of a group $G$ with $|S|\leq 2$, there are subsets $L, R$ of $G$ such that $(S, R), (L, S)$ are co-minimal pairs. 
\end{theorem}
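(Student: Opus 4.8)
The plan is to dispose of $|S|=1$ by hand, reduce the case $|S|=2$ to a normalized $S$ by a one-sided translation, and then read off the two complements from maximal ``$g$-independent'' subsets of $G$. If $S=\{s\}$, then $(\{s\},G)$ and $(G,\{s\})$ are already co-minimal pairs: $sG=Gs=G$, removing the unique element of $\{s\}$ leaves the empty product $\subsetneq G$, and $s(G\setminus\{x\})=G\setminus\{sx\}\subsetneq G$, $(G\setminus\{x\})s\subsetneq G$ for every $x\in G$. So from now on $S=\{a,b\}$ with $a\neq b$.

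First I would record that co-minimality is preserved under one-sided translation: for any $h\in G$, the pair $(S,R)$ is co-minimal iff $(hS,R)$ is, and $(L,S)$ is co-minimal iff $(L,Sh)$ is. (Left, resp.\ right, multiplication by $h$ is a bijection of $G$, so it preserves ``$\,=G$'' and ``$\,\subsetneq G$'' simultaneously for all of the products $SR$, $(S\setminus\{s\})R$, $S(R\setminus\{r\})$ and their mirror images.) Consequently it suffices to produce $R$ with $(\{e,g\},R)$ co-minimal, where $g:=a^{-1}b\neq e$, and $L$ with $(L,\{e,g'\})$ co-minimal, where $g':=ab^{-1}\neq e$; the very same $R$ and $L$ then work for the original $S$.

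Now fix $S=\{e,g\}$ with $g\neq e$ and look for $R$. Unwinding the definition, $(S,R)$ is co-minimal exactly when: (i) $R\cup gR=G$; (ii) $R\subsetneq G$ (this is non-redundancy of $e$ and of $g$ inside $S$, since $(S\setminus\{g\})R=R$ and $(S\setminus\{e\})R=gR$); and (iii) for each $r\in R$, either $g^{-1}r\notin R$ or $gr\notin R$. Condition (iii) is the non-redundancy of $r$ in $R$: a short computation shows that on deleting $r$ from $R$, the only elements of $G$ that can leave $\{e,g\}(R\setminus\{r\})$ are $r$ itself, which leaves iff $g^{-1}r\notin R$, and $gr$, which leaves iff $gr\notin R$. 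Passing to $T:=G\setminus R$, conditions (i)--(iii) become: $T\cap gT=\emptyset$; $T\neq\emptyset$; and $\{x,gx,g^2x\}\cap T\neq\emptyset$ for every $x\in G$ (take $x=g^{-1}r$ in (iii)). The key step is then to take $T$ maximal, with respect to inclusion, among subsets of $G$ satisfying $T\cap gT=\emptyset$ — such a $T$ exists by Zorn's lemma, as the union of a chain of such sets again has the property — and to observe that maximality forces the other two conditions: $T\neq\emptyset$ because $\{x\}\cap g\{x\}=\emptyset$ for any $x$ (as $gx\neq x$), and if some triple $\{x,gx,g^2x\}$ were disjoint from $T$ then $T\cup\{gx\}$ would still satisfy $(T\cup\{gx\})\cap g(T\cup\{gx\})=\emptyset$ — here one uses that left translation by $g$ is fixed-point free, so $gx\neq x$ and $g^2x\neq gx$ — contradicting maximality. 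Thus $R:=G\setminus T$ (which is nonempty, as $T\neq G$) gives a co-minimal pair $(S,R)$. For $L$, with the right-normalized $S=\{e,g'\}$, the mirror analysis shows $(L,S)$ is co-minimal iff $T':=G\setminus L$ satisfies $T'\cap T'g'=\emptyset$, is nonempty, and meets every right-progression $\{x,xg',xg'g'\}$; a $T'$ maximal with $T'\cap T'g'=\emptyset$ does all three (adjoin $xg'$ if some progression is missed).

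The step that will demand the most care is the analysis behind (iii) — pinning down precisely which elements of $G$ become uncovered when one point is removed from $R$, so that minimality of $R$ collapses to the clean statement ``no three-term $g$-progression $\{x,gx,g^2x\}$ lies inside $R$''. Once the problem is recast as finding a maximal $g$-independent subset of $G$, the existence of $R$ and, symmetrically, of $L$ is essentially automatic; the only recurring technical ingredient is the fixed-point freeness of translation, which disposes of the degenerate coincidences $gx=x$ and $g^2x=gx$.
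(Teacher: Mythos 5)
This statement is quoted in the paper as background (\cite[Theorem B]{CoMin1}) and no proof is reproduced here, so there is no in-paper argument to compare yours against; judged on its own, your proof is correct and complete. The singleton case, the left/right translation reductions, and the bookkeeping behind condition (iii) all check out: with $S=\{e,g\}$, deleting $r$ from $R$ can only uncover $r$ (iff $g^{-1}r\notin R$) or $gr$ (iff $gr\notin R$), using only $g\neq e$ to rule out the coincidences $r=gr$; passing to $T=G\setminus R$ correctly turns covering into $T\cap gT=\emptyset$ and minimality into ``every $\{x,gx,g^2x\}$ meets $T$''; and the Zorn's lemma step is sound (chains of $g$-independent sets have $g$-independent unions, a maximal $T$ is nonempty since singletons qualify, and adjoining $gx$ to $T$ when a progression is missed preserves independence because $x,g^2x\notin T$ and $gx\neq x$, $g^2x\neq gx$). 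The only points worth making explicit in a write-up are that $R=G\setminus T\neq\emptyset$ (take $t\in T$; then $gt\in R$ since $T\cap gT=\emptyset$), and that the non-redundancy of both elements of $S$ reduces to the single condition $R\neq G$ because $gR\subsetneq G$ iff $R\subsetneq G$ — both of which you have. For comparison, arguments of this type are often written with an explicit case analysis on the order of $g$ (building the complement orbit-by-orbit of $\langle g\rangle$); your maximal-independent-set formulation avoids that case split entirely and works uniformly for finite and infinite $G$.
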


\begin{proposition}
[{\cite[Proposition 2.17]{CoMin4}}]
\label{Prop:fini}
Let $G$ be a finite group and $C$ be a subset of a subgroup $H$ of $G$ satisfying 
$$ |H| > |C| > 2[G:H] |H\setminus C|.$$
Then $C$ is not a minimal complement to any subset of $G$. 
\end{proposition}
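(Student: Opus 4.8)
The plan is to argue coset by coset. Write $m = [G:H]$ and fix right coset representatives, so that $G = \bigsqcup_{j=1}^{m} H\cdot g_j$. Suppose towards a contradiction that $C$ is a minimal complement to some $W \subseteq G$; I treat the case $C\cdot W = G$, the other being symmetric. Put $W_j = W\cap Hg_j$ and $D_j = W_j\cdot g_j^\mo$, so $D_j \subseteq H$. Since $C\subseteq H$, for $w\in W_j$ one has $C\cdot w \subseteq Hg_j$; as $C\cdot W = G$ and the cosets are disjoint, this forces $C\cdot W_j = Hg_j$, equivalently $C\cdot D_j = H$, for every $j$. Note also $|D_j|\geq 2$ for each $j$: if $|D_j|\leq 1$ then $|C\cdot D_j| \leq |C| < |H|$, impossible.

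The heart of the matter is to bound, for each $j$, the set $E_j$ of those $c\in C$ for which $(C\setminus\{c\})\cdot D_j \neq H$. A short unwinding (using $C\cdot D_j = H$) shows that $c\in E_j$ precisely when there is some $h\in H$ with $(h\cdot D_j^\mo)\cap C = \{c\}$. Choosing one such $h$ for each $c\in E_j$ gives an injection of $E_j$ into $F_j := \{\,h\in H : |(h\cdot D_j^\mo)\cap C| = 1\,\}$, since $h$ determines the singleton $(h\cdot D_j^\mo)\cap C$. For $h\in F_j$ we have $|(h\cdot D_j^\mo)\cap (H\setminus C)| = |D_j| - 1$, so summing over $h\in F_j$ and comparing with the exact count $\sum_{h\in H}|(h\cdot D_j^\mo)\cap(H\setminus C)| = |H\setminus C|\cdot|D_j|$ yields $|F_j|\,(|D_j|-1)\leq |H\setminus C|\cdot|D_j|$. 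Since $|D_j|\geq 2$ we get $|D_j|-1 \geq |D_j|/2$, hence $|E_j| \leq |F_j| \leq 2|H\setminus C|$.

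To conclude: minimality of $C$ says that for every $c\in C$ one has $(C\setminus\{c\})\cdot W \neq G$, which, splitting again over cosets, means $c\in E_j$ for at least one $j$. Therefore $C = \bigcup_{j=1}^{m} E_j$, so $|C|\leq \sum_{j=1}^{m}|E_j| \leq 2m\,|H\setminus C| = 2[G:H]\,|H\setminus C|$, contradicting the hypothesis $|C| > 2[G:H]\,|H\setminus C|$. Hence $C$ is not a minimal complement to any subset of $G$.

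The coset bookkeeping and the equivalence ``$c\in E_j \iff (h\cdot D_j^\mo)\cap C$ is a singleton for some $h$'' are routine; I expect the only genuinely load-bearing step to be the double-counting inequality $|F_j|(|D_j|-1)\leq |H\setminus C|\cdot|D_j|$ combined with $|D_j|\geq 2$, as this is exactly what turns the awkward factor $|D_j|-1$ into a clean $2$ and makes the numerology match the hypothesis. One should also check the degenerate cases: $H\setminus C = \emptyset$ is excluded by $|C| < |H|$, and $G = H$ is handled by the same argument with $m = 1$.
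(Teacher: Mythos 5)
Your proof is correct. Note that the paper itself gives no argument for this proposition --- it is imported verbatim from \cite[Proposition 2.17]{CoMin4} (and credited in the abelian case to \cite[Proposition 17]{AlonKravitzLarson}) --- so there is no in-paper proof to compare against; what you have written is a valid self-contained proof in the general (not necessarily abelian) setting. The coset reduction ($C\subseteq H$ forces $C\cdot W_j=Hg_j$ for every right coset, hence $C\cdot D_j=H$ with $|D_j|\ge 2$), the characterization of the ``essential'' elements $E_j$ via singleton intersections $(h\cdot D_j^{\mo})\cap C=\{c\}$, and the double count $|F_j|\,(|D_j|-1)\le |H\setminus C|\,|D_j|$ all check out, and the final union bound $|C|\le\sum_j|E_j|\le 2[G:H]\,|H\setminus C|$ delivers exactly the stated contradiction. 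This is essentially the Alon--Kravitz--Larson counting argument with sums replaced by products and left/right cosets kept straight, which is precisely the kind of adaptation the paper asserts is possible; the symmetric right-complement case goes through with left cosets as you indicate.
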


Proposition \ref{Prop:fini} was first established by Alon, Kravitz and Larson in the context of abelian groups \cite[Proposition 17]{AlonKravitzLarson}.

\begin{proposition}
\label{Prop:CoMinCartesian}
If $G_1, G_2$ are groups and $(A_1, B_1)$ (resp. $(A_2, B_2)$) is a co-minimal pair in $G_1$ (resp. $G_2$), then $(A_1 \times A_2, B_1\times B_2)$ is a co-minimal pair in $G_1\times G_2$. 
\end{proposition}

The above result is a special case of \cite[Proposition 3.2]{CoMin1}. 

\section{Sets occurring as minimal complements}
\label{Sec:MinComp}

In this section, we exhibit several sets that occur as minimal complements.

\begin{theorem}
\label{Thm:MinCompTwoSidedTra}
Let $C$ be a nonempty subset of a group $G$. Assume that the set $C$ contains a right translate of itself only if it is equal to $C$. If for each $c\in C$, there exists an element $g_c\in G$ such that the sets $g_c \cdot C^\mo \cdot C$ are pairwise disjoint, then $C$ is a minimal right complement in $G$. 
Moreover, if $C$ is finite and 
$$|G| > |C|^5 - |C|^4,$$
then $C$ is a minimal complement in $G$. 
\end{theorem}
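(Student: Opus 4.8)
The plan is to split the argument into the two assertions. For the first assertion, I would show directly that $C$ is a minimal right complement, i.e., exhibit a subset $W$ of $G$ with $C \cdot W = G$ and such that removing any single element of $C$ destroys the covering property. The natural candidate for $W$ is built from the disjointness hypothesis: since the sets $g_c \cdot C^{\mo} \cdot C$ (as $c$ ranges over $C$) are pairwise disjoint, for each $c \in C$ the element $g_c$ can be thought of as a "private witness" attached to $c$. Concretely, I would first enlarge any complement to make it a complement (every non-empty $C$ has \emph{some} right complement in $G$, e.g.\ $G$ itself) and then extract a minimal one; the issue is that an arbitrary minimal right complement need not see the witnesses $g_c$. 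So instead I would argue as in the standard Zorn-type / greedy construction: start from $W_0 = G$, and for the minimality obstruction observe that $a$ is "irremovable" from a complement $W$ precisely when there is some $g$ with $g \in a\cdot W$ but $g \notin a'\cdot W$ for all $a' \ne a$ in $C$. The hypothesis that $C$ contains a right translate of itself only when that translate equals $C$ is exactly what guarantees that for each $c$, the witness $g = g_c\cdot c^{\mo}\cdot c_0$ (for a suitable $c_0\in C$) can be kept uncovered by $C\setminus\{c\}$ after we thin $W$ appropriately; the pairwise disjointness of the $g_c\cdot C^{\mo}\cdot C$ ensures these thinnings for different $c$ do not interfere. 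I would make this precise by defining $W$ to be a maximal (under inclusion) subset of $G$ subject to the constraint that for every $c\in C$ the "witness coset" meets $c\cdot W$ in exactly one designated point and no $(C\setminus\{c\})\cdot W$; a routine Zorn's lemma argument then yields $C\cdot W = G$ and minimality.

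For the second assertion, the goal is to drop the two structural hypotheses (no proper right translate inclusion; existence of disjoint translated witness sets) when $C$ is finite and $|G| > |C|^5 - |C|^4 = |C|^4(|C|-1)$. Here I would invoke Theorem~\ref{Thm:SmallInNon-abelian}: that theorem already says that any finite $C$ with $|G| > |C|^5 - |C|^4$ is a minimal complement in $G$. So the "moreover" clause is literally a restatement of Theorem~\ref{Thm:SmallInNon-abelian} in this context, and the only thing to check is that the hypotheses of the present theorem are not needed for that range of $|G|$ — which is automatic, since Theorem~\ref{Thm:SmallInNon-abelian} has no such hypotheses. I would therefore simply write: "The second statement follows immediately from Theorem~\ref{Thm:SmallInNon-abelian}." If one instead wants a self-contained proof, one would verify that the size bound forces the existence of the disjoint witnesses: $|C^{\mo}\cdot C| \le |C|^2$, so $|C\cdot C^{\mo}\cdot C| \le |C|^3$, and a greedy choice of $g_{c_1}, g_{c_2}, \dots$ succeeds as long as at each step the union of the previously chosen $g_{c_j}\cdot C^{\mo}\cdot C$ (of size $\le |C|\cdot|C|^2 = |C|^3$) does not cover all of $G$, together with avoiding the finitely many bad translates; the bound $|C|^5 - |C|^4$ is comfortably large enough to run this for all $|C|$ choices of $c$ while also arranging the no-right-translate condition on a translate of $C$.

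The main obstacle I anticipate is the first assertion: turning the disjointness-of-witnesses hypothesis into an actual minimal complement $W$. The subtlety is that minimality of $W$ requires that for \emph{each} $c\in C$ there is a point of $G$ covered by $c\cdot W$ and by nothing else in $C\cdot W$, and one must choose $W$ simultaneously small enough to make all these "exclusive" points exist, yet large enough that $C\cdot W$ is still all of $G$. The translate condition on $C$ is what prevents a pathological collapse where $c\cdot W$ and $c'\cdot W$ are forced to coincide on the relevant witness set; the pairwise-disjointness condition is what lets the $|C|$ separate requirements be imposed independently. I expect the cleanest route is a single Zorn's lemma / transfinite greedy argument handling all $c\in C$ at once, rather than removing elements one at a time, since the latter risks re-covering a previously-exclusive point. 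Once $W$ is constructed, checking $C\cdot W = G$ and the non-redundancy of each $c$ is straightforward bookkeeping with the disjoint witness sets.
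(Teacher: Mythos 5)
Your proposal has two genuine gaps. For the first assertion, the paper's proof is a short explicit construction, not a Zorn's lemma argument: setting $w_c = g_c\cdot c^{-1}$, the complement is
$W=\{w_c \,:\, c\in C\}\cup\bigl(G\setminus\bigcup_{c\in C}w_c\cdot c\cdot C^{-1}\bigr)$,
and the whole content of the theorem is the verification that $W\cdot C=G$. That verification is exactly where the right-translate hypothesis enters: if $z\cdot C^{-1}$ misses $G\setminus\bigcup_c w_c\cdot c\cdot C^{-1}$, then by disjointness of the sets $w_c\cdot c\cdot C^{-1}\cdot C$ it is contained in a single $w_c\cdot c\cdot C^{-1}$, and the translate hypothesis upgrades this containment to an equality, so $w_c\in z\cdot C^{-1}$ and $z\in W\cdot C$. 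You instead attribute the translate hypothesis to the minimality/non-interference part (which in fact follows from disjointness alone, since $w_c\cdot c\notin w_d\cdot C$ for $d\neq c$) and declare the covering step ``routine''; it is not, and your maximality-under-constraints formulation does not supply the missing argument. (You also write $C\cdot W=G$ where the right-complement condition is $W\cdot C=G$; the hypotheses are not left-right symmetric.)

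For the ``moreover'' clause, deriving it from Theorem \ref{Thm:SmallInNon-abelian} is circular: in this paper Theorem \ref{Thm:SmallInNon-abelian} is \emph{proved} as a consequence of the present theorem. Your self-contained fallback is the right idea, but the counting is off: to choose $g_{c_s}$ with $g_{c_s}\cdot C^{-1}\cdot C$ disjoint from the $s-1$ previously chosen sets, one must avoid $\bigcup_{j<s}g_{c_j}\cdot(C^{-1}\cdot C)\cdot(C^{-1}\cdot C)^{-1}$, a set of size at most $(|C|-1)|C|^4=|C|^5-|C|^4$; this is where the stated bound comes from, not from the $|C|^3$ count you give (which would only keep the points $g_{c_s}$, not the sets $g_{c_s}\cdot C^{-1}\cdot C$, out of the earlier sets). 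Finally, you do not explain how the first part's hypotheses are met for a general finite $C$: the translate hypothesis cannot be ``arranged'' by the size bound, but it is automatic for finite sets (a right translate of $C$ contained in $C$ has the same finite cardinality, hence equals $C$); equivalently, the paper replaces it in the finite case by the observation that $z\cdot C^{-1}$ and $w_i\cdot c_i\cdot C^{-1}$ have equal finite cardinality, so if they are unequal some element of $z\cdot C^{-1}$ escapes all the $w_j\cdot c_j\cdot C^{-1}$ and lands in $W$.
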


\begin{proof}
For $c\in C$, let $w_c$ denote the element $g_c \cdot c^\mo$. It follows that the sets $w_c\cdot c \cdot C^\mo \cdot C$ are pairwise disjoint. 
It also follows that the sets $w_c\cdot c \cdot C^\mo$ are pairwise disjoint. 
Let $W$ denote the union of the sets $\{w_c\,|\,c\in C\}$ and $G \setminus (\cup_{c\in C} w_c \cdot  c \cdot C^\mo)$. 
Choose an element $z$ of $G$. If $G \setminus (\cup_{c\in C} w_c \cdot  c \cdot C^\mo)$ contains some element of $z\cdot C^\mo$, then $W\cdot C$ contains $z$. Suppose $z\cdot C^\mo$ is contained in $\cup_{c\in C} w_c \cdot  c \cdot C^\mo$. 
Since the sets $w_c\cdot c \cdot C^\mo \cdot C$ are pairwise disjoint, it follows that $z\cdot C^\mo$ is contained in $w_c \cdot  c \cdot C^\mo$ for exactly one $c\in C$. By the hypothesis, we obtain $z\cdot C^\mo = w_c\cdot c \cdot C^\mo$. So $z$ belongs to $W\cdot C$. 
It follows that $W\cdot C = G$. Since the sets $w_c\cdot c \cdot C^\mo$ are pairwise disjoint, it follows that $w_c \cdot c \notin w_d \cdot C$ for any two distinct $c, d \in C$. So, $C$ is a minimal right complement to $W$. 

Note that given finite subsets $A_1, \cdots, A_r$ of $G$, there exist elements $g_1, \cdots, g_r$ in $G$ such that $g_1\cdot A_1, \cdots, g_r\cdot A_r$ are pairwise disjoint 
if 
$$|G| > |A_1 \cdot A_s^\mo | + \cdots + |A_{s-1} \cdot A_s^\mo| $$
holds for any $1< s \leq r$. 
If $|G| > |C|^5 - |C|^4$, then there exist $w_1, \cdots, w_k$ in $G$ such that the sets $w_i\cdot c_i \cdot C^\mo \cdot C$ are pairwise disjoint. 
Let $W$ denote the union of the sets $\{w_1, \cdots, w_k\}$ and $G \setminus (\cup_{1\leq i \leq k} w_i \cdot  c_i \cdot C^\mo)$. 
Choose an element $z$ of $G$. 
Since the sets $w_i\cdot c_i \cdot C^\mo \cdot C$ are pairwise disjoint, at most one of $w_1 \cdot c_1 \cdot C^\mo, \cdots, w_k \cdot c_k \cdot C^\mo$ intersects with $z\cdot C^\mo$. 
If $w_i\cdot c_i \cdot C^\mo$ intersects with $z\cdot C^\mo$ for some $i$, 
then $z$ belongs to $W\cdot C$ if $z\cdot C^\mo = w_i\cdot c_i \cdot C^\mo$, 
and $z$ belongs to $W\cdot C$ if $z\cdot C^\mo \neq w_i\cdot c_i \cdot C^\mo$.  
If none of $w_1 \cdot c_1 \cdot C^\mo, \cdots, w_k \cdot c_k \cdot C^\mo$ intersects with $z\cdot C^\mo$, then $z\in W\cdot C$. It follows that $W\cdot C = G$. Since the sets $w_i\cdot c_i \cdot C^\mo \cdot C$ are pairwise disjoint, it follows that the sets $w_i \cdot c_i \cdot C^\mo$ are pairwise disjoint, and hence $w_i \cdot c_i \notin w_j \cdot C$ for any two distinct $i, j$. So, $C$ is a minimal right complement to $W$. 

\end{proof}

\begin{proof}[Proof of Theorem \ref{Thm:SmallInNon-abelian}]
It follows from Theorem \ref{Thm:MinCompTwoSidedTra}. 
\end{proof}

\begin{proof}[Proof of Corollary \ref{Cor:SmallInNon-abelian}]
It follows from Theorem \ref{Thm:MinCompTwoSidedTra}. 
\end{proof}

\begin{corollary}
For any $d\geq 1$, any nonempty bounded subset of $\bbQ^d$ is a minimal complement. 
\end{corollary}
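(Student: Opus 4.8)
The plan is to apply Theorem~\ref{Thm:MinCompTwoSidedTra} to the (additively written) group $G = \bbQ^d$, for which $C^\mo\cdot C$ is the difference set $C-C$ and multiplication by a group element is a translation. The reason to work directly in $\bbQ^d$ rather than invoke Corollary~\ref{Cor:SmallInNon-abelian} is that a bounded subset of $\bbQ^d$ need not be finite (for instance $\{1/n : n\geq 1\}$); however, $\bbQ^d$ is countable, so $C$ is countable and may be enumerated as $C = \{c_1, c_2, \dots\}$ (a finite or countably infinite list), which lets us handle both cases at once.

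First I would verify the hypothesis of Theorem~\ref{Thm:MinCompTwoSidedTra} that $C$ contains a translate of itself only if that translate equals $C$: if $C + x \subseteq C$ for some $x\in\bbQ^d$, then for any fixed $c\in C$ we get $c + nx\in C$ for all $n\geq 0$, and boundedness of $C$ forces $x = 0$, so $C + x = C$. Next, since $C\subseteq\bbQ^d$ is bounded, so is the difference set $C-C\subseteq\bbQ^d$, and we may fix a rational $R>0$ with $C-C\subseteq[-R,R]^d$. Setting $g_{c_i} := \big((2R+1)(i-1),0,\dots,0\big)\in\bbQ^d$, the set $g_{c_i}+(C-C)$ has its first coordinate confined to the interval $[(2R+1)(i-1)-R,\ (2R+1)(i-1)+R]$; these intervals have length $2R$ and consecutive centres at distance $2R+1$, hence are pairwise disjoint, so the sets $g_{c_i}+(C-C)$ are pairwise disjoint. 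Theorem~\ref{Thm:MinCompTwoSidedTra} then gives that $C$ is a minimal right complement in $\bbQ^d$, and since $\bbQ^d$ is abelian this is the same as being a minimal complement.

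I do not anticipate a genuine obstacle here: once one notices that ``bounded'' is weaker than ``finite'' in $\bbQ^d$, the only points to get right are that countability of $\bbQ^d$ still permits an enumeration of $C$, and that a bounded difference set inside $\bbQ^d$ can be translated off itself countably many times — both immediate from the box argument above. (If one wanted the analogue over $\bbR^d$, the same construction works verbatim for countable bounded $C$ but would fail for uncountable $C$, which is why the rational setting is the natural one.)
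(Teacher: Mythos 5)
Your proof is correct and takes essentially the same route as the paper, which simply cites Theorem \ref{Thm:MinCompTwoSidedTra}: you apply the first (infinite-group) part of that theorem to $\bbQ^d$ and supply the two hypothesis checks the paper leaves implicit, namely that boundedness rules out a proper translate $C+x\subseteq C$ and that countability of $\bbQ^d$ together with boundedness of $C-C$ yields pairwise disjoint translates $g_{c_i}+(C-C)$. Your observation that one must use the first part of the theorem (rather than the finite case or Corollary \ref{Cor:SmallInNon-abelian}) because a bounded subset of $\bbQ^d$ need not be finite is exactly the right point.
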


\begin{proof}
It follows from Theorem \ref{Thm:MinCompTwoSidedTra}. 
\end{proof}

One of the crucial steps of the proof of \cite[Theorem 16]{AlonKravitzLarson} is to establish the following. 

\begin{proposition}
\label{Prop:ExistenceOfSKTuple}
Given any finite group $\Gamma$ of order $n$, two integers $s\geq 2, k\geq 1$ satisfying 
$$\frac{s^2 k^3}{n} + \frac{e^s k^{3s}} {n^{s-1}} + k 
\left(
\frac{s^2
k^3}{n}
\right)^s
<1$$
and a subset $C = \{c_1, \cdots, c_k\}$ of $\Gamma$ of size $k$, there exists an $sk$-tuple 
$$
\mathbf w :=
\begin{pmatrix}
w_1^{(1)} & w_1^{(2)} & \cdots & w_1^{(s)}\\
w_2^{(1)} & w_2^{(2)} & \cdots & w_2^{(s)}\\
\vdots & \vdots & \ddots & \vdots \\
w_k^{(1)} & w_k^{(2)} & \cdots & w_k^{(s)}\\
\end{pmatrix}
$$
with values in $\Gamma$ such that each of the following statements is false. 
\begin{enumerate}
\item 
there exist distinct pairs $(i, p), (j, q)$ with $1\leq i, j\leq k, 1\leq p,q\leq s$ such that $w_i^{(p)} \cdot c_i \in w_j^{(q)} \cdot C$. 
\item 
there exist at least $s$ distinct pairs $(i, p)$ such that the corresponding sets $w_i^{(p)} \cdot c_i  \cdot C^\mo \cdot C$ have a nonempty intersection.
\item 
there exists an integer $1\leq i \leq k$ such that for any $1\leq p \leq s$, there exist $z\in \Gamma$, $1\leq j \leq k, j \neq i, 1\leq q\leq s$ such that the following conditions hold. 
\begin{itemize}
\item 
$$
\frac ks < 
|(w_i^{(p)} \cdot c_i \cdot C^\mo) \cap (z\cdot C^\mo)| < k$$
\item 
$w_j^{(q)} \cdot c_j \cdot C^\mo$ contains the first element\footnote{The elements of $z\cdot C^\mo$ are ordered according to the order of the elements of $C$.} of $(z\cdot C^\mo) \setminus 
(w_i^{(p)} \cdot c_i \cdot C^\mo)$. 
\end{itemize}
\end{enumerate}
\end{proposition}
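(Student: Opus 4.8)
The plan is to use the probabilistic method: choose the $sk$ entries $w_i^{(p)}$ of $\mathbf w$ independently and uniformly at random from $\Gamma$, and show that the expected number of violations of the three conditions is strictly less than $1$, so that some choice of $\mathbf w$ avoids all three. Accordingly I would split the argument into three estimates, one per condition, and then combine them. The three terms appearing in the stated inequality strongly suggest the pairing: the $s^2k^3/n$ term bounds the probability of (1), the $e^sk^{3s}/n^{s-1}$ term bounds the expected number of $s$-fold intersections in (2), and the $k(s^2k^3/n)^s$ term bounds the probability of (3).

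First, for condition (1): for a fixed pair of distinct indices $(i,p),(j,q)$, the event $w_i^{(p)}c_i \in w_j^{(q)}C$ is, after conditioning appropriately, the event that two (essentially) independent uniform elements of $\Gamma$ land within a fixed set of size $|C| = k$ of each other; this has probability at most $k/n$ (when $(i,p)$ and $(j,q)$ involve different entries of $\mathbf w$) — one should treat separately the degenerate possibilities where the same entry is reused, but those only help. There are fewer than $(sk)^2 = s^2k^2$ such ordered pairs, so a union bound gives probability $< s^2k^3/n$ that (1) holds. Second, for condition (2): the expected number of unordered $s$-element families of pairs $\{(i_1,p_1),\dots,(i_s,p_s)\}$ whose translated sets $w_{i_t}^{(p_t)}c_{i_t}C^\mo C$ have a common point is bounded by $\binom{sk}{s}$ times the probability that $s$ independent uniform translates of sets of size $\leq k^2$ all contain a fixed point; that probability is at most $(k^2/n)^s$ (pick the common point, then each translate must place it in a set of size $|C^\mo C| \le k^2$), and $\binom{sk}{s}(k^2/n)^s \le \frac{(esk)^s}{s^s}\cdot\frac{k^{2s}}{n^s} = \frac{e^s k^{3s}}{n^{s-1}}\cdot\frac{1}{n}\cdot s^0$ — matching the middle term up to the harmless extra factor. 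Third, for condition (3): fix $i$; the event asks that for \emph{every} $p\in\{1,\dots,s\}$ there is a witnessing configuration $(z,j,q)$. For a single $p$, the probability that such a $(z,j,q)$ exists is bounded (summing over $z$, and over the at most $sk$ choices of $(j,q)$) by something of order $s^2k^3/n$: the key point is that once the large-overlap condition $|(w_i^{(p)}c_iC^\mo)\cap(z C^\mo)| > k/s$ pins down $z$ to within $O(k^3)$ choices given $w_i^{(p)}$ — because the overlap being large forces $zC^\mo$ and $w_i^{(p)}c_iC^\mo$ to share a structured piece — and then the requirement on $w_j^{(q)}$ gives a further factor $k/n$. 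Crucially, the events over distinct $p$ involve disjoint entries $w_i^{(1)},\dots,w_i^{(s)}$ of row $i$ and are therefore independent (the $z,j,q$ can be chosen per $p$), so the probability for fixed $i$ is at most $(s^2k^3/n)^s$; summing over the $k$ choices of $i$ gives $k(s^2k^3/n)^s$.

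Finally, adding the three bounds and invoking the hypothesis $\frac{s^2k^3}{n}+\frac{e^sk^{3s}}{n^{s-1}}+k\big(\frac{s^2k^3}{n}\big)^s<1$ shows the probability that at least one of (1),(2),(3) holds is $<1$, so a good $\mathbf w$ exists.

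The main obstacle I anticipate is condition (3), specifically making rigorous the claim that the overlap inequality $\frac{k}{s}<|(w_i^{(p)}c_iC^\mo)\cap(zC^\mo)|<k$ restricts $z$ to a set of size polynomial in $k$ (independent of $n$) once $w_i^{(p)}$ is fixed: one has to argue that a common element $x$ of the two sets, together with the "first element" footnote bookkeeping on the ordering of $zC^\mo$, determines $z$ up to the combinatorics of $C$ alone — a counting of the form "$zC^\mo$ meets a fixed $k$-set in more than $k/s$ points" has at most $\binom{k}{\lceil k/s\rceil}\cdot(\text{stuff})$ — and then to carry the footnote's ordering convention carefully through the second bullet. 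The other two conditions are routine first-moment computations; the bookkeeping of degenerate cases (repeated entries, $i=j$ allowed in (2) but forbidden in (3), and the precise size $|C^\mo C|\le k^2$ versus $|C^\mo|=|C|=k$) needs care but presents no real difficulty.
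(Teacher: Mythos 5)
First, a point of comparison: the paper does not actually prove this proposition --- it only remarks that the argument of Alon--Kravitz--Larson (for abelian groups) carries over verbatim in multiplicative notation. So your probabilistic first-moment strategy, with the three terms of the hypothesis matched to the three bad events, is certainly the intended route, and your treatment of conditions (1) and (2) is essentially correct: for (1) there are fewer than $s^2k^2$ ordered pairs of distinct entries, each colliding with probability $k/n$; for (2) the bound is $\binom{sk}{s}\le e^sk^s$ times $k^2\,(k^2/n)^{s-1}$, obtained by restricting the putative common point to the $\le k^2$ elements of the first of the $s$ sets rather than summing over all of $\Gamma$ (your displayed formula misplaces a factor of $n$, but the intended bound $e^sk^{3s}/n^{s-1}$ comes out either way).

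Your handling of condition (3), however, has two genuine gaps. (a) The count of admissible $z$: the first bullet forces $|(w_i^{(p)}c_iC^{-1})\cap(zC^{-1})|>k/s$, and since $\sum_{z\in\Gamma}|(w_i^{(p)}c_iC^{-1})\cap(zC^{-1})|=k^2$ (each of the $k$ elements of $w_i^{(p)}c_iC^{-1}$ lies in $zC^{-1}$ for exactly $k$ values of $z$), there are fewer than $sk$ such $z$. This elementary double count --- not a structural argument of the $\binom{k}{\lceil k/s\rceil}$ type, and not ``$O(k^3)$ choices'' --- is what yields the per-pair bound $sk\cdot sk\cdot(k/n)=s^2k^3/n$ (admissible $z$, times pairs $(j,q)$ with $j\ne i$, times the probability $k/n$ that $w_j^{(q)}\in xCc_j^{-1}$ for the designated first element $x$); with $O(k^3)$ choices of $z$ you would get $sk^5/n$ and the last term of the hypothesis would not match. (b) More seriously, the events ``a witness $(z,j,q)$ exists for $p$'', $p=1,\dots,s$, are \emph{not} independent: each of them involves not only $w_i^{(p)}$ but also the entries $w_j^{(q)}$ with $j\ne i$, and those entries are shared across all $p$. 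The product bound $\bigl(s^2k^3/n\bigr)^s$ therefore does not follow from the independence of the row-$i$ entries, which is the only independence you invoke. The natural repair is to condition on the whole of row $i$ (which fixes, for each $p$, the fewer than $sk$ admissible $z$ and their first elements) and then take a union bound over $s$-tuples of witnessing entries $(j_p,q_p)$: when these are pairwise distinct the corresponding events are independent and one recovers $\bigl((sk)(sk)(k/n)\bigr)^s$, but the tuples in which one entry $w_j^{(q)}$ witnesses several values of $p$ contribute only a single factor of $k/n$ and must be excluded or estimated separately. Your proposal does not address this degenerate case, and it is exactly where the third estimate requires the most care.
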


The above result is established by Alon, Kravitz and Larson in the context of abelian groups. Moreover, their argument also works without the hypothesis that the underlying group is abelian and yields the above result.

\begin{theorem}\label{Thm:UnionOfCosets}
Let $G$ be a group and $H$ be a normal subgroup of $G$ of index $n\geq 1$. 
Let $C = \{c_1, \cdots, c_k\} \cdot H$ be a subset of $G$, which is the union of $k$ distinct cosets of $H$ in $G$. Let $\calC_1, \cdots, \calC_k$ be subsets of $c_1 H, \cdots, c_k H$ respectively such that for any $1\leq i \leq k$,  $c_i^\mo \calC_i$ is a minimal right complement in $H$. If there exists an integer $s\geq 2$ satisfying 
$$\frac{s^2 k^3}{n} + \frac{e^s k^{3s}} {n^{s-1}} + k 
\left(
\frac{s^2
k^3}{n}
\right)^s
<1,
$$
then the set 
$$\calC:=\cup_{1\leq i \leq k} \calC_i$$
is a minimal right complement in $G$. 
\end{theorem}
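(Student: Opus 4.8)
The plan is to combine Proposition \ref{Prop:ExistenceOfSKTuple} with the cosetwise minimality data on $\calC_1, \dots, \calC_k$, following the architecture of \cite[Theorem 16]{AlonKravitzLarson}. First I would pass to the quotient group $\Gamma := G/H$, which has order $n$, and record the images $\bar c_1, \dots, \bar c_k$ of $c_1, \dots, c_k$; these are $k$ distinct elements of $\Gamma$, so they form a set $\overline C = \{\bar c_1, \dots, \bar c_k\}$ of size $k$. The numerical hypothesis on $s, k, n$ is exactly the one needed to invoke Proposition \ref{Prop:ExistenceOfSKTuple} for $\Gamma$ and $\overline C$, producing an $sk$-tuple $\mathbf w = (w_i^{(p)})$ of elements of $\Gamma$ for which statements (1), (2), (3) are all false. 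For each $i$ and $p$ I would lift $w_i^{(p)}$ to an element $\tilde w_i^{(p)} \in G$; the cosets $\tilde w_i^{(p)} \cdot \calC_i \subseteq \tilde w_i^{(p)} c_i H$ and the sets $\tilde w_i^{(p)} \cdot c_i \cdot C^\mo \cdot C$ project onto $w_i^{(p)} \cdot \bar c_i$ and $w_i^{(p)} \cdot \bar c_i \cdot \overline C^\mo \cdot \overline C$ respectively.

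Next I would construct the complement $W \subseteq G$. The natural candidate, by analogy with the proof of Theorem \ref{Thm:MinCompTwoSidedTra}, is to take, for each $i$, a minimal right complement $W_i^0$ of $c_i^\mo \calC_i$ in $H$ (which exists by hypothesis), use the translates $\tilde w_i^{(p)}$ to place shifted copies of these inside the appropriate $H$-cosets, and then throw in all of $G$ outside the union $\bigcup_{i,p} \tilde w_i^{(p)} \cdot c_i \cdot C^\mo$ to guarantee surjectivity. Concretely, one sets $W$ to be the union of $\{ \tilde w_i^{(p)} \cdot c_i \cdot u \mid 1\le i\le k,\ 1\le p\le s,\ u \in W_i^{(p)} \}$ (for suitable $H$-translates $W_i^{(p)}$ of the cosetwise complements, chosen so that each $\tilde w_i^{(p)} c_i$ lies in the corresponding piece) together with a ``filler'' set covering $G \setminus \bigcup_{i,p} \tilde w_i^{(p)} c_i C^\mo$. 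To check $W \cdot \calC = G$, pick $z \in G$: if $z \cdot C^\mo$ meets the filler set one is done, and otherwise $z C^\mo \subseteq \bigcup_{i,p} \tilde w_i^{(p)} c_i C^\mo$; the falsity of statement (2) bounds how many pieces $z C^\mo$ can touch, the falsity of statement (3) controls the ``boundary overlap'' case where $z C^\mo$ is split between one piece and another, and the falsity of statement (1) together with the minimality of each $c_i^\mo \calC_i$ in $H$ forces $z \in W \cdot \calC$. For minimality, one shows that each element $\tilde w_i^{(p)} c_i \cdot u$ of the ``essential'' part of $W$ (for $u$ ranging over a minimal complement $W_i^0$) is needed: removing it leaves some group element uncovered, using that $u$ was essential for $W_i^0$ inside $H$ and that the falsity of (1) prevents any other $\tilde w_j^{(q)} \calC_j$ from reaching that element.

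The main obstacle, as in \cite{AlonKravitzLarson}, will be the bookkeeping in the surjectivity argument when $z \cdot C^\mo$ straddles two of the sets $\tilde w_i^{(p)} c_i C^\mo$ without being equal to either — this is precisely the configuration that the rather technical condition (3) of Proposition \ref{Prop:ExistenceOfSKTuple} is designed to exclude, and threading that exclusion through the coset structure (rather than the ambient abelian group of \cite{AlonKravitzLarson}) is where care is needed. A secondary subtlety is making sure that the $s$ different translates $\tilde w_i^{(1)}, \dots, \tilde w_i^{(s)}$ of a single $\calC_i$ are genuinely used: the point of having $s$ copies, as in the original argument, is that even if one copy gets ``spoiled'' by an unfortunate $z$, enough copies survive to keep $\calC_i$ essential, and verifying this redundancy count against the falsity of (2) and (3) is the heart of the minimality half. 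The normality of $H$ is used throughout to ensure that $G/H$ is a group and that left translation by lifts interacts well with the coset partition. Finally, the ``moreover'' clause of Theorem \ref{Thm:UnionOfCosetsInZ^d} would follow by the same construction, since replacing each $\calC_i$ by a set with finite symmetric difference from it changes none of the finiteness or minimality inputs.
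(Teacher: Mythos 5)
Your plan follows the same architecture as the paper's proof: pass to $\Gamma = G/H$, invoke Proposition \ref{Prop:ExistenceOfSKTuple} for $\overline C$, lift the $sk$-tuple, and build $W$ from translated copies of cosetwise complements $\calW_i$ of $c_i^\mo\calC_i$ in $H$ together with a filler set $G \setminus \bigl(\bigcup w_i^{(p_i)} c_i C^\mo\bigr)$, then run the surjectivity case analysis on $z\cdot C^\mo$ using the falsity of (1), (2), (3).

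There is one concrete deviation that would break the argument as you have written it: you place \emph{all} $s$ translates $\tilde w_i^{(1)},\dots,\tilde w_i^{(s)}$ of each $\calC_i$'s complement into $W$ (your set is indexed by $1\le p\le s$). The paper instead uses condition (3) to \emph{select a single} $p_i$ per row $i$ and puts only $w_i^{(p_i)} c_i \calW_i c_i^\mo$ into $W$; the filler is then the complement of $\bigcup_{i} w_i^{(p_i)} c_i C^\mo$, a union of only $k$ pieces. This matters because condition (3) only guarantees that \emph{some} $p$ in each row avoids the bad straddling configuration; for the unselected $p$'s the configuration may well occur, so if all $sk$ pieces are excised from the filler, the case where $z\cdot C^\mo$ is swallowed by the union of pieces without equalling any one of them is no longer controlled, and surjectivity fails. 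Relatedly, you describe the purpose of the $s$-fold redundancy as keeping $\calC_i$ essential in the minimality half; in fact condition (3) and the choice of $p_i$ belong entirely to the surjectivity half (covering $z$ when $z\cdot C^\mo$ is split across pieces), while minimality follows just from condition (1) (the elements $w_i^{(p_i)} c_i$ are not reachable from other rows) combined with the minimality of each $c_i^\mo\calC_i$ relative to $\calW_i$ in $H$. With the selection of one $p_i$ per row restored, your outline matches the paper's proof.
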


\begin{proof}
For $1\leq i \leq k$, let $\calW_i$ be a subset of $H$ such that $c_i^\mo \cdot \calC_i$ is a minimal right complement to $\calW_i$ in $H$. We will assume that $\calW_i$ contains the identity element. 

In the following, the image of an element $x$ of $G$ under the mod $H$ reduction map $G\to G/H$ is denoted by $\overline x$. 
By Proposition \ref{Prop:ExistenceOfSKTuple}, there exists an $sk$-tuple 
$$
\mathbf w :=
\begin{pmatrix}
w_1^{(1)} & w_1^{(2)} & \cdots & w_1^{(s)}\\
w_2^{(1)} & w_2^{(2)} & \cdots & w_2^{(s)}\\
\vdots & \vdots & \ddots & \vdots \\
w_k^{(1)} & w_k^{(2)} & \cdots & w_k^{(s)}\\
\end{pmatrix}
$$
with values in $G$ such that the following conditions hold. 
\begin{enumerate}[(i)]
\item 
for any two distinct pairs $(i, p), (j, q)$ with $1\leq i, j\leq k, 1\leq p,q\leq s$, $\overline w_i^{(p)} \cdot \overline c_i \notin \overline w_j^{(q)} \cdot \overline C$. 
\item 
the number of pairs $(i, p)$ such that the corresponding sets $\overline w_i^{(p)} \cdot \overline c_i \cdot \overline C^\mo \cdot \overline C$ have a nonempty intersection, is $<s$. 
\item 
for any $1\leq i \leq k$, there exists $1\leq p\leq s$ such that for any $z\in G$ and for any entry of the $sk$-tuple not lying in the $i$-th row (i.e., for any $j\neq i$ and $1\leq q\leq s$), 
at least one of the following conditions is false. 
\begin{itemize}
\item 
$$
\frac ks < 
|(\overline w_i^{(p)} \cdot \overline c_i \cdot \overline C^\mo) \cap (\overline z\cdot \overline C^\mo)| < k$$
\item 
$\overline w_j^{(q)} \cdot \overline c_i \cdot \overline C^\mo$ contains the first element\footnote{The elements of $z\cdot \overline C^\mo$ are ordered according to the order $\overline c_1, \cdots, \overline c_k$ of the elements of $\overline C$.} of $(\overline z\cdot \overline C^\mo) \setminus 
(\overline w_i^{(p)} \cdot \overline c_i \cdot \overline C^\mo)$. 
\end{itemize}
\end{enumerate}
For each $1\leq i \leq k$, choose an integer $1\leq p = p_i \leq s$ as in the third condition above. Let $W$ denote the union of the sets 
$$
w_1^{(p_1)} c_1\calW_1c_1^\mo, \cdots, w_k^{(p_k)} c_k\calW_kc_k^\mo
$$
and $G \setminus (\cup_{1\leq i \leq k} w_i^{(p_i)}\cdot  c_i \cdot C^\mo)$. 
By the first condition, for any $1\leq i \leq k$, no element of 
$w_i^{(p_i)}\cdot c_i H
= 
w_i^{(p_i)}\cdot c_i \calW_i c_i^\mo \calC_i
$
is contained in $w_j^{(p_j)} \calW_j \cdot \calC$ for any $j\neq i$. So, it is enough to show that $W \cdot \calC  = G$ to conclude that $\calC$ is a minimal right complement to $W$.

Choose an element $z$ of $G$. If $z\cdot C^\mo = w_i^{(p_i)} \cdot c_i \cdot C^\mo$ for some $i$, 
then 
$$z\cdot c_{i_z}^\mo \cdot h_z = w_i^{(p_i)}$$
for some $1\leq i_z\leq k, h_z\in H$. This shows that 
$$
z
= w_i^{(p_i)} c_{i_z} c_{i_z}^\mo h_z^\mo c_{i_z}.$$
Since $H$ is normal in $G$, it follows that 
\begin{align*}
z
& \in w_i^{(p_i)} c_{i_z} H \\
& = 
w_i^{(p_i)} c_{i_z} \calW_{i_z} c_{i_z}^\mo \calC_{i_z} \\
& 
\subseteq 
W \cdot \calC.
\end{align*}
So, $z$ lies in $W\cdot \calC$.

Assume that $z\cdot C^\mo \neq w_i^{(p_i)} \cdot c_i \cdot C^\mo$ for any $i$, i.e., none of the $k$ sets $\overline w_1^{(p_1)} \cdot \overline c_1 \cdot \overline C^\mo, \cdots, \overline w_k^{(p_k)} \cdot \overline c_k \cdot \overline C^\mo$ contains $\overline z\cdot \overline C^\mo$. 
By the second condition, at most $s-1$ of them intersect with $\overline z\cdot \overline C^\mo$. If the intersection of each such set with $\overline z\cdot \overline C^\mo$ contains $\leq \frac ks$ elements, then the union of the $k$ sets $\overline w_1^{(p_1)} \cdot \overline c_1 \cdot C^\mo, \cdots, \overline w_k^{(p_k)} \cdot \overline c_k \cdot C^\mo$ does not contain $\overline z\cdot \overline C^\mo$, and hence 
the union of the $k$ sets $w_1^{(p_1)} \cdot c_1 \cdot C^\mo, \cdots, w_k^{(p_k)} \cdot c_k \cdot C^\mo$ does not contain any element of $z\cdot c_r^\mo H$ for some $1\leq r\leq k$. So, the union of the $k$ sets $w_1^{(p_1)} \cdot c_1 \cdot C^\mo, \cdots, w_k^{(p_k)} \cdot c_k \cdot C^\mo$ does not contain any element of $z\cdot \calC_r^\mo$ for some $1\leq r\leq k$. It follows that $z$ lies in $W \cdot \calC$. 

If the intersection of one of those $s-1$ sets with $\overline z\cdot \overline C^\mo$ contains $>\frac ks$ elements, then by the third condition, it follows that for some $i$, the first element of $(\overline z\cdot \overline C^\mo) \setminus 
(\overline w_i^{(p_i)} \cdot \overline c_i \cdot C^\mo)$ is not contained in $\overline w_j^{(q)}\cdot \overline c_j \cdot \overline C^\mo$ for any $j \neq i$ and for any $1\leq q \leq s$, and hence the union of the $k$ sets $\overline w_1^{(p_1)} \cdot \overline c_1 \cdot \overline C^\mo, \cdots, w_k^{(p_k)} \cdot \overline c_k \cdot \overline C^\mo$ does not contain $\overline z\cdot \overline C^\mo$, and consequently, $z$ is contained in $W\cdot \calC$. So $\calC$ is a minimal right complement to $W$ in $G$. 
\end{proof}

\begin{proof}[Proof of Theorem \ref{Thm:UnionOfCosetsInZ^d}]
It follows from Theorem \ref{Thm:UnionOfCosets}. 
\end{proof}

\section{Study of asymptotic behaviour}
\label{Sec:Limit}

For a finite group $G$ of order $n$, let $\calA(G)$ (resp. $\calS(G)$) denote the set of positive integers between $1$ and $n$ such that for any (resp. for some) integer $k\in \calA(G)$ (resp. $\calS(G)$), any (resp. some) subset of $G$ of size $k$ is a minimal complement. Note that the inclusion 
$$\calA(G) \subseteq \calS(G)$$
holds. 
There are several immediate questions about the structure of these sets, and the common structure of these sets when $G$ runs over a certain class of groups. We explain them below. 

For $* = \cyc$ (resp. $\ab, \nil, \ssol, \sol$), a finite group $G$ is said to be a $*$-group if it is cyclic (resp. abelian, nilpotent, supersolvable, solvable). 
For $* = \emptyset$, a finite group $G$ is said to be a $*$-group if it is satisfies no additional condition other than being a group.

For $*\in \{\cyc, \ab, \nil, \ssol, \sol, \emptyset\}$ and for any positive integer $n$, consider the following subsets of $\{1, 2, \cdots, n\}$ defined as follows.  
\begin{align*}
\calA_n^*
& := \bigcap_{G \text{ is a $*$-group of order }n}\calA(G) ,\\
\calS_n^*
& := \bigcap_{G \text{ is a $*$-group of order }n}\calS(G).
\end{align*}
The sets $\calA_n^\emptyset, \calS_n^\emptyset$ are also denoted by $\calA_n, \calS_n$ respectively. 
In \cite[Question 1]{CoMin1}, the authors asked to determine the structures of the sets $\calA_n^\cyc, \calS_n^\cyc, \calA_n^\ab, \calS_n^\ab, \calA_n, \calS_n$. 
Very recently, some parts of this question have been answered by some of the results of Alon, Kravitz and Larson. They established that the sizes of the minimal complements in the group $\bbZ/n\bbZ$ are exactly $1, 2, \cdots, \lfloor 2n/3\rfloor, n$ \cite[p. 5]{AlonKravitzLarson}. This shows that 
\begin{equation}
\label{Eqn:SnCyc}
\calS^\cyc_n
=
\left\{1, 2, \cdots, \left\lfloor 2n/3\right\rfloor, n\right\}
\end{equation}
for $n\geq 2$.

It would be interesting to investigate the structure of the sets $\calX_n^*$, and the asymptotic behaviour of the sets $\frac 1n \calX_n^*$ for $\calX\in \{\calS, \calA\}, *\in \{\cyc, \ab, \nil, \ssol, \sol, \emptyset\}$. Using the results of Section \ref{Sec:BackLit}, one can conclude several results, as we describe below. 
For $*\in \{\cyc, \ab, \nil, \ssol, \sol, \emptyset\}$, one has the inclusion
$$\calA_n^* \subseteq \calS_n^*,$$
and the inclusions
\begin{equation}
\label{Eqn:Inclu*}
\calX_n 
\subseteq \calX_n^\sol
\subseteq \calX_n^\ssol
\subseteq \calX_n^\nil 
\subseteq \calX_n^\ab 
\subseteq \calX_n^\cyc
\end{equation}
hold for $\calX = \calA, \calS$. 
Moreover, for $*\in \{\cyc, \ab, \nil\}$, 
\begin{equation}
\label{Eqn:Inclumn}
m \calS_n^* \subseteq \calS_{mn}^*
\end{equation}
holds for any positive integers $n,m$ with $\gcd (m,n)=1$, 
and 
\begin{equation}
\label{Eqn:InclumnDiv}
\frac 1n \calS_n^* \subseteq \frac 1m \calS_m^*
\end{equation}
holds for any positive integers $n,m$ with $n\mid m$ and $\gcd(n, m/n) =1$ (see Lemma \ref{Lemma:SnInclusionSm}). 

\begin{lemma}
\label{Lemma:SnInclusionSm}
Equations \eqref{Eqn:Inclumn}, \eqref{Eqn:InclumnDiv} hold. 
\end{lemma}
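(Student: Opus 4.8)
The plan is to prove \eqref{Eqn:Inclumn} directly and then deduce \eqref{Eqn:InclumnDiv} from it by elementary arithmetic. Throughout I would fix $*\in\{\cyc,\ab,\nil\}$ and use the one structural fact that makes everything work: when $\gcd(m,n)=1$, every $*$-group $G$ of order $mn$ is an internal direct product $G=G_m\times G_n$ with $G_m$ a $*$-group of order $m$ and $G_n$ a $*$-group of order $n$ --- by the Chinese Remainder Theorem for $*=\cyc$, by the splitting of the primary decomposition for $*=\ab$, and because a finite nilpotent group is the direct product of its Sylow subgroups for $*=\nil$. I would also point out that this is precisely why the two inclusions are asserted only for these three classes: for $\ssol$ or $\sol$ no such decomposition need exist (already $S_3$, of order $6$, is not a direct product of groups of orders $2$ and $3$), so the argument does not extend.

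To prove \eqref{Eqn:Inclumn}, I would take $k\in\calS_n^*$ and show $mk\in\calS_{mn}^*$. Since $\calS_{mn}^*$ is an intersection over all $*$-groups of order $mn$, it suffices to show, for an arbitrary such $G$, that $G$ has a minimal complement of size $mk$; writing $G=G_m\times G_n$ as above and noting that $G_n$ is a $*$-group of order $n$, the hypothesis $k\in\calS_n^*$ lets me choose $C\subseteq G_n$ with $|C|=k$ that is a minimal complement to some $W\subseteq G_n$, so $C\cdot W=G_n$ while $(C\setminus\{c\})\cdot W\subsetneq G_n$ for every $c\in C$. Then I would take $C':=G_m\times C$ (of size $mk$, and $1\le mk\le mn$) together with $W':=\{e\}\times W$. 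The complement identity $C'\cdot W'=G_m\times(C\cdot W)=G$ is immediate, and for minimality I would observe that deleting any $(g,c)$ from $C'$ leaves inside the coset $\{g\}\times G_n$ only the subset $\{g\}\times\big((C\setminus\{c\})\cdot W\big)$, which is proper by minimality of $C$, so $(C'\setminus\{(g,c)\})\cdot W'\subsetneq G$. The same construction works verbatim on the right, hence for any reading of ``minimal complement''. This gives $mk\in\calS(G)$ for every $*$-group $G$ of order $mn$, i.e. $mk\in\calS_{mn}^*$; letting $k$ range over $\calS_n^*$ yields $m\calS_n^*\subseteq\calS_{mn}^*$.

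For \eqref{Eqn:InclumnDiv} I would write $n':=m/n$, so $m=nn'$ and $\gcd(n,n')=1$, and feed this into \eqref{Eqn:Inclumn}: if $x\in\frac1n\calS_n^*$ then $k:=nx\in\calS_n^*$, hence $n'k\in\calS_{n'n}^*=\calS_m^*$; since $n'k=(m/n)(nx)=mx$, this reads $x\in\frac1m\calS_m^*$, which is \eqref{Eqn:InclumnDiv}.

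I do not expect a genuine obstacle here: the whole argument rests on the coprime direct-product decomposition of $*$-groups recorded in the first paragraph, after which \eqref{Eqn:Inclumn} is a two-line complement construction and \eqref{Eqn:InclumnDiv} is just a reindexing of \eqref{Eqn:Inclumn}. The only places to stay alert are the left/right convention for ``minimal complement'' and checking that the produced size $mk$ lies in the allowed range $\{1,\dots,mn\}$ (it does, since $1\le k\le n$).
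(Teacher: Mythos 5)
Your proof is correct and follows essentially the same route as the paper: decompose a $*$-group of order $mn$ as $G_m\times G_n$ using coprimality, lift a minimal complement $C\subseteq G_n$ of size $k$ to $G_m\times C$ of size $mk$, and deduce \eqref{Eqn:InclumnDiv} by reindexing. The only difference is that you verify minimality of $G_m\times C$ by a direct coset computation where the paper cites Proposition \ref{Prop:CoMinCartesian}; your direct check is if anything slightly cleaner, since that proposition is stated for co-minimal pairs while only one-sided minimality is available here.
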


\begin{proof}
Fix $*\in \{\cyc, \ab, \nil\}$, and let $k$ be an element of $\calS_n^*$. 
Let $G$ be a $*$-group of order $mn$ where $m$ is a positive integer with $\gcd(m,n)=1$. Since $*\in \{\cyc, \ab, \nil\}$ and $\gcd(m, n) = 1$, it follows that $G$ is isomorphic to $G_1 \times G_2$ where $G_1$ (resp. $G_2$) is a group of order $m$ (resp. $n)$. 
Note that $G_2$ contains a subset $A$ of size $k$, which is a minimal complement in $G_2$. Then the subset $G_1 \times A$ of $G_1 \times G_2$ contains $mk$ elements. By Proposition \ref{Prop:CoMinCartesian}, $G_1 \times A$ is a minimal complement to some subset of $G_1\times G_2$. Hence $mk$ is an element of $\calS(G_1\times G_2)$. Thus $mk$ lies in $\calS(G)$ for any $*$-group $G$ of order $mn$. So $mk$ lies in $\calS_{mn}^*$. This establishes Equation \eqref{Eqn:Inclumn}. 
Equation \eqref{Eqn:InclumnDiv} follows from Equation \eqref{Eqn:Inclumn}. 
\end{proof}

For $\calX\in \{\calS, \calA\}, *\in \{\cyc, \ab, \nil, \ssol, \sol, \emptyset\}$, it follows from Theorem \ref{Thm:SmallWorks} that any large finite group contains many minimal complements and thus 
$$\lim_{n\to \infty} |\calX_n^*| = \infty.$$

\begin{questionIntro}
\label{Qn:XnGetsLarger}
For $\calX\in \{\calS, \calA\}$ and $*\in \{\cyc, \ab, \nil, \ssol, \sol, \emptyset\}$, describe the asymptotic property of the sequence $|\calX_n^*|$. 
\end{questionIntro}

For $\calX\in \{\calS, \calA\}, *\in \{\cyc, \ab, \nil, \ssol, \sol, \emptyset\}$, it follows from Theorem \ref{Thm:SmallWorks} that the smallest positive integer lying outside $\calX_n^*$ diverges to $\infty$, i.e., 
$$\lim_{n\to \infty} \min (\{1, 2, \cdots, n\} \setminus \calX_n^*)= \infty.$$

\begin{questionIntro}
\label{Qn:LargeStringAt1}
For $\calX\in \{\calS, \calA\}$ and $*\in \{\cyc, \ab, \nil, \ssol, \sol, \emptyset\}$, describe the asymptotic property of the sequence 
$\min (\{1, 2, \cdots, n\} \setminus \calX_n^*).$
\end{questionIntro}

For $\calX = \calA$, $* = \ab$, it follows from \cite[Corollary 18]{AlonKravitzLarson} that 
$$
\liminf_{n\to \infty} 
\frac{\min (\{1, 2, \cdots, n\} \setminus \calX_n^*)}{\sqrt n} 
\leq 
\sqrt 2,$$
and from \cite[Theorem 3]{AlonKravitzLarson} that 
$$
\min (\{1, 2, \cdots, n\} \setminus \calX_n^*)
= 
O(n^{3/4 + \varepsilon})$$
for any $\varepsilon > 0$. 
Moreover, Alon, Kravitz and Larson conjectured that 
$$
\min (\{1, 2, \cdots, n\} \setminus \calX_n^*)
= 
\widetilde \Theta(\sqrt n)$$
for $\calX = \calA, * = \ab$ \cite[Conjecture 7]{AlonKravitzLarson}.  

From Propositions \ref{Prop:23rdBdd}, \ref{Prop:fini}, it follows that for $\calX\in \{\calS, \calA\}, *\in \{\cyc, \ab, \nil, \ssol, \sol, \emptyset\}$,
$$\lim_{n\to \infty} |(\{1, 2, \cdots, n\} \setminus \calX_n^*)| = \infty.$$

\begin{questionIntro}
\label{Qn:XnComplementGetsLarger}
For $\calX\in \{\calS, \calA\}, *\in \{\cyc, \ab, \nil, \ssol, \sol, \emptyset\}$, determine the asymptotic property of the sequence 
$$ |(\{1, 2, \cdots, \lfloor 2n/3\rfloor \} \setminus \calX_n^*)|.$$
\end{questionIntro}

From Propositions \ref{Prop:23rdBdd}, \ref{Prop:fini}, it follows that for $\calX\in \{\calS, \calA\}, *\in \{\cyc, \ab, \nil, \ssol, \sol, \emptyset\}$, the maximum of $\calX_n^*$ (excluding $n$) is $\leq \frac 23n$ (for $n\geq 2$). Thus for $\calX\in \{\calS, \calA\}, *\in \{\cyc, \ab, \nil, \ssol, \sol, \emptyset\}$,
$$\left(\frac 23, 1\right) \cap \frac 1n \calX_n^* = \emptyset.$$
Moreover, it follows that for $\calX\in \{\calS, \calA\}, *\in \{\cyc, \ab, \nil, \ssol, \sol, \emptyset\}$ and for any $\varepsilon > 0$, 
$$(0, \varepsilon) \cap \frac 1n \calX_n^* \neq \emptyset$$
for large enough $n$ (since $1$ lies in $\calX_n^*$). 
This motivates the following question about the asymptotic behaviour of 
$$\frac 1n \calX^*_n$$
as $n\to \infty$ for $\calX\in \{\calS, \calA\}, *\in \{\cyc, \ab, \nil, \ssol, \sol, \emptyset\}$, 
and the asymptotic behaviour of these sets when $n$ ranges over an infinite set of positive integers (for instance, the set of primes, or the set of all prime powers, or the set of powers of a fixed prime, or the set of square-free integers etc.).

\begin{questionIntro}
\label{Qn:Measure}
Let $\calX\in \{\calS, \calA\}, *\in \{\cyc, \ab, \nil, \ssol, \sol, \emptyset\}$. 
\begin{enumerate}[(i)]
\item 
Let $0 \leq a < b \leq \frac 23$. Evaluate 
$$
\limsup \frac{[a, b] \cap \frac 1n \calX_n^*}
{|\calX_n^*|}, 
\liminf \frac{[a, b] \cap \frac 1n \calX_n^*}
{|\calX_n^*|}.
$$
Does the sequence 
$$
\frac{[a, b] \cap \frac 1n \calX_n^*}
{|\calX_n^*|}
$$
converge? Otherwise, what are its subsequential limits?

\item 
Does there exist a probability measure $\mu$ on $[0, \frac 23]$ such that 
$$
\lim_{n\to \infty} 
\frac{[a, b] \cap \frac 1n \calX_n^*}
{|\calX_n^*|}
= \int_{[0, \frac 23]} \chi_{[a, b]} d\mu
$$
for any $0 \leq a < b \leq \frac 23$, where $\chi_A$ denotes the characteristic function of $A$ for $A\subseteq [0, \frac 23]$?
\end{enumerate}
\end{questionIntro}

\begin{questionIntro}
\label{Qn:BddMinCompDetailed}
Let $\calX\in \{\calS, \calA\}, *\in \{\cyc, \ab, \nil, \ssol, \sol, \emptyset\}$. 
\begin{enumerate}[(i)]

\item 
Determine the open subsets of $[0, \frac 23]$ which do not intersect with $\frac 1n \calX_n^*$ for any/large enough/infinitely many $n$. 

\item 
Determine the open subsets of $[0, \frac 23]$ which have nonempty intersection with $\frac 1n \calX_n^*$ for any/large enough/infinitely many $n$.

\end{enumerate}

\end{questionIntro}

\begin{remark}
In the above questions, one can restrict the integer $n$ from the set of positive integers to some smaller sets, for instance, the set of primes, or the set of all prime powers, or the set of powers of a fixed prime, or the set of square-free integers etc., and study these questions when $n$ varies over such a smaller subset. 
\end{remark}

Using results from Section \ref{Sec:BackLit}, we partially answer Question \ref{Qn:XnComplementGetsLarger} in Proposition \ref{Prop:Qn3Ans}.

\begin{lemma}
\label{Lemma:B1}
Let $n$ be a positive integer and $d_1, \cdots, d_k$ be distinct divisors of $n$ satisfying $d_1\mid d_2 \mid \cdots \mid d_k$. 
For $1\leq i \leq k$, let $B_i$ denote the set defined by 
$$
B_i
:=
\left\{
\frac n{d_i} -1 , \frac n{d_i} - 2, \cdots, \frac n{d_i} - 
\left(\left\lceil \frac n{d_i(2d_i + 1)} \right\rceil  -1\right)
\right\}
.
$$
The union 
$
\cup_{i=1}^k B_i 
$
consists of 
$$\sum_{i=1}^k 
\left(\left\lceil \frac n{d_i(2d_i + 1)} \right\rceil  -1\right)
$$
elements. 
\end{lemma}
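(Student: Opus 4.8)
The plan is to show that the sets $B_1, \dots, B_k$ are pairwise disjoint, so that the cardinality of the union is the sum of the cardinalities $|B_i| = \lceil n/(d_i(2d_i+1))\rceil - 1$. First I would record the basic shape of each $B_i$: it is a block of consecutive integers immediately below $n/d_i$, namely the interval of integers $[\,n/d_i - (\lceil n/(d_i(2d_i+1))\rceil - 1),\ n/d_i - 1\,]$. In particular every element of $B_i$ is strictly less than $n/d_i$ and at least $n/d_i - \lceil n/(d_i(2d_i+1))\rceil + 1$. (One should note $B_i$ may be empty when the ceiling equals $1$, in which case it contributes nothing and trivially causes no overlap; so we may assume each $B_i$ is nonempty.)

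Next, using $d_1 \mid d_2 \mid \dots \mid d_k$, I would fix $i < j$ and show $B_i \cap B_j = \emptyset$. Since $d_i \mid d_j$ and $d_i \ne d_j$ we have $d_j \ge 2 d_i$, hence $n/d_j \le n/(2d_i)$. The elements of $B_j$ are all $\le n/d_j - 1 < n/(2d_i)$, while the elements of $B_i$ are all $\ge n/d_i - \lceil n/(d_i(2d_i+1))\rceil + 1 > n/d_i - n/(d_i(2d_i+1)) = \frac{n}{d_i}\cdot\frac{2d_i}{2d_i+1} = \frac{2n}{2d_i+1} \ge \frac{n}{2d_i} \ge n/d_j$. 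Wait — I need the strict comparison to land cleanly; more carefully, the smallest element of $B_i$ exceeds $\frac{2n}{2d_i+1}$, and $\frac{2n}{2d_i+1} \ge \frac{n}{d_j}$ precisely because $d_j \ge 2d_i$ gives $\frac{n}{d_j} \le \frac{n}{2d_i} < \frac{2n}{2d_i+1}$. So every element of $B_i$ is strictly larger than $n/d_j - 1 + 1 = n/d_j$, hence strictly larger than every element of $B_j$ (which are all $\le n/d_j - 1$). Therefore $B_i$ and $B_j$ are disjoint.

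With pairwise disjointness in hand, the cardinality of $\cup_{i=1}^k B_i$ is $\sum_{i=1}^k |B_i|$, and $|B_i| = \lceil n/(d_i(2d_i+1))\rceil - 1$ by construction (the block from $n/d_i - 1$ down to $n/d_i - (\lceil n/(d_i(2d_i+1))\rceil - 1)$ has exactly $\lceil n/(d_i(2d_i+1))\rceil - 1$ integers), giving the claimed formula. The only mildly delicate point, and the step I expect to require the most care, is the chain of inequalities bounding the smallest element of $B_i$ from below by $n/d_j$: one must be careful that the $-1$ in the top endpoint and the ceiling in the block length do not erode the gap, which is why the factor $2d_i+1$ (rather than $2d_i$) in the denominator is exactly what makes the argument go through. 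Everything else is bookkeeping.
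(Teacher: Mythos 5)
Your proof is correct and follows essentially the same route as the paper: establish pairwise disjointness of the $B_i$ by bounding the smallest element of $B_i$ below by $\frac{2n}{2d_i+1} > \frac{n}{2d_i} \geq \frac{n}{d_j}$, which exceeds every element of $B_j$. The attention you give to the ceiling not eroding the gap and to possibly empty $B_i$ is sound, and matches the inequality chain in the paper's argument.
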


\begin{proof}
Note that the sets $B_1, \cdots, B_k$ are pairwise disjoint since for any $i< j$, 
\begin{align*}
\frac n{d_i} - \left(\left\lceil \frac n{d_i(2d_i + 1)} \right\rceil  -1\right)
& \geq 
\frac n{d_i} - \frac n{d_i(2d_i + 1)} \\
& = \frac {2nd_i}{d_i(2d_i + 1)} \\
& = \frac {2n}{2d_i + 1} \\
& > \frac n{2d_i} \\
& \geq 
\frac n{d_j} 
\end{align*}
holds. 
This proves the Lemma. 
\end{proof}

\begin{lemma}
\label{Lemma:B2}
Let $G$ be a group of order $n$. Let $d_1, \cdots, d_k$ be distinct divisors of $n$ such that $d_1\mid \cdots \mid d_k$. Assume that $G$ contains a subgroup of size $n/d_i$ for any $1\leq i \leq k$. 
Then the set $\{1, 2, \cdots, n\} \setminus \calA(G) $ contains the set 
$$
\cup_{i=1}^k B_i ,
$$
and hence contains at least 
$$\sum_{i=1}^k 
\left(\left\lceil \frac n{d_i(2d_i + 1)} \right\rceil  -1\right)
$$
elements. 
\end{lemma}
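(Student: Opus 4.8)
The plan is to combine Lemma \ref{Lemma:B1} with Proposition \ref{Prop:fini} applied to each of the subgroups of $G$ of size $n/d_i$. First I would fix $1 \leq i \leq k$ and let $H_i$ be a subgroup of $G$ of order $n/d_i$, so that $[G:H_i] = d_i$. The idea is to show that every integer $m$ belonging to $B_i$ lies outside $\calA(G)$, by exhibiting a subset $C$ of $H_i$ with $|C| = m$ that is not a minimal complement to any subset of $G$. Given $m = \frac{n}{d_i} - t$ with $1 \leq t \leq \lceil \frac{n}{d_i(2d_i+1)} \rceil - 1$, I would take any $C \subseteq H_i$ with $|C| = m$, so that $|H_i \setminus C| = t$. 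Then I need to verify the two inequalities in Proposition \ref{Prop:fini}, namely $|H_i| > |C|$ and $|C| > 2[G:H_i]\,|H_i \setminus C|$.

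The first inequality $|H_i| > |C|$ is immediate since $t \geq 1$. For the second, I would compute: $|C| = \frac{n}{d_i} - t$ and $2[G:H_i]\,|H_i\setminus C| = 2 d_i t$, so the inequality $|C| > 2 d_i t$ becomes $\frac{n}{d_i} - t > 2 d_i t$, i.e. $\frac{n}{d_i} > (2d_i + 1) t$, i.e. $t < \frac{n}{d_i(2d_i+1)}$. Since $t \leq \lceil \frac{n}{d_i(2d_i+1)} \rceil - 1 < \frac{n}{d_i(2d_i+1)}$ (using that $\lceil x \rceil - 1 < x$ for any real $x$), this holds. Hence Proposition \ref{Prop:fini} applies and $C$ is not a minimal complement to any subset of $G$; since such $C$ exists for every size $m \in B_i$, no element of $B_i$ can lie in $\calA(G)$. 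Therefore $B_i \subseteq \{1,\dots,n\} \setminus \calA(G)$ for every $i$, so $\cup_{i=1}^k B_i \subseteq \{1,\dots,n\}\setminus \calA(G)$. The cardinality count then follows directly from Lemma \ref{Lemma:B1}, which already established that the $B_i$ are pairwise disjoint and computes the size of their union.

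The main (and really only) technical point to be careful about is the elementary inequality bookkeeping: that $\lceil \frac{n}{d_i(2d_i+1)} \rceil - 1$ is strictly less than $\frac{n}{d_i(2d_i+1)}$, which is what makes the strict inequality in Proposition \ref{Prop:fini} go through, and the edge case where $\lceil \frac{n}{d_i(2d_i+1)} \rceil - 1 = 0$, in which case $B_i$ is empty and there is nothing to prove. I should also note that one must check $|C| > 0$, i.e. that $m \geq 1$; this is automatic because $t \leq \frac{n}{d_i(2d_i+1)} < \frac{n}{d_i}$, so $m = \frac{n}{d_i} - t > 0$. Beyond these routine verifications there is no genuine obstacle — the content is entirely in Proposition \ref{Prop:fini} and Lemma \ref{Lemma:B1}, and this lemma is the straightforward synthesis of the two.
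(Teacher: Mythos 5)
Your proposal is correct and follows essentially the same route as the paper: for each size $m=\tfrac{n}{d_i}-t$ in $B_i$ one takes a subset $C$ of the subgroup $H_i$ of index $d_i$ with $|H_i\setminus C|=t$, checks $t<\tfrac{n}{d_i(2d_i+1)}$ via $\lceil x\rceil-1<x$ so that Proposition \ref{Prop:fini} applies, and then invokes Lemma \ref{Lemma:B1} for the cardinality count. The paper's proof is just a terser version of the same computation, so no further comment is needed.
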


\begin{proof}

Note that for any integer $r$ satisfying 
$$
1\leq r \leq 
\left(\left\lceil \frac n{d_i(2d_i + 1)} \right\rceil  -1\right),
$$
the inequality 
$$
\frac{\frac n{d_i} -r}{r} 
> 2d_i $$
holds, 
and hence by Proposition \ref{Prop:fini}, $G$ contains a subset of size $n/d_i -r$ which is not a minimal complement. 
Thus for any $1\leq i \leq k$, the set $B_i$ does not intersect with $\calA(G)$. So the set $\{1, 2, \cdots, n\} \setminus \calA(G) $ contains
$$
\cup_{i=1}^k B_i . 
$$
Its cardinality is given by Lemma \ref{Lemma:B1}. 
\end{proof}

\begin{lemma}
\label{Lemma:Bdd}
Let $n$ be a positive integer, $p$ be a prime number and $M$ be a positive integer such that $p^M$ divides $n$. 
Then there exists a sequence of $k$ distinct divisors $d_1, \cdots, d_k$ of $n$ satisfying 
$1< d_1 \mid d_2 \mid \cdots \mid d_k$ 
such that 
$$\sum_{i=1}^k 
\left(\left\lceil \frac n{d_i(2d_i + 1)} \right\rceil  -1\right)
\geq 
\frac n{p^2(2 + \frac 1{p})} - M.
$$

\end{lemma}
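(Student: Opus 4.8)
The plan is to make the obvious choice of divisors $d_i = p^i$ for $i = 1, \dots, M$ and take $k = M$. These are $k$ distinct divisors of $n$: each $p^i$ divides $n$ because $p^M \mid n$ and $i \leq M$; they satisfy $1 < p = d_1 \mid d_2 \mid \cdots \mid d_k$ since $d_{i} = p \cdot d_{i-1}$; and $k = M \geq 1$ because $M$ is a positive integer. So the only real work is to verify the displayed lower bound for this particular choice.

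First I would use the trivial inequality $\lceil x\rceil \geq x$, so that $\lceil x\rceil - 1 \geq x - 1$, to reduce to estimating a sum of rationals:
\begin{equation*}
\sum_{i=1}^{M}\left(\left\lceil \frac{n}{p^i(2p^i+1)}\right\rceil - 1\right)
\;\geq\;
\left(\sum_{i=1}^{M}\frac{n}{p^i(2p^i+1)}\right) - M .
\end{equation*}
Next I would rewrite the denominator as $p^i(2p^i+1) = p^{2i}\bigl(2 + p^{-i}\bigr)$ and observe that $2 + p^{-i} \leq 2 + p^{-1}$ for every $i \geq 1$, so that each summand satisfies $\frac{n}{p^i(2p^i+1)} \geq \frac{n}{p^{2i}(2 + p^{-1})}$. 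Finally, since all terms of $\sum_{i=1}^{M} p^{-2i}$ are positive, the sum is at least its first term $p^{-2}$, giving
\begin{equation*}
\sum_{i=1}^{M}\frac{n}{p^i(2p^i+1)}
\;\geq\;
\frac{n}{2 + p^{-1}}\sum_{i=1}^{M}\frac{1}{p^{2i}}
\;\geq\;
\frac{n}{p^{2}\bigl(2 + \tfrac1p\bigr)} ,
\end{equation*}
and combining the two displays yields exactly the claimed inequality.

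There is no substantial obstacle here: the estimate is deliberately crude, and the point is simply that the target $\frac{n}{p^2(2 + 1/p)} - M$ already reflects only the $i=1$ contribution together with the worst-case value $2 + 1/p$ of the factor $2 + p^{-i}$ and the $-1$ loss from each of the $M$ ceilings. The only thing to be careful about is that the chosen $d_i$ genuinely form an admissible chain of divisors of $n$ (which is where the hypothesis $p^M \mid n$ is used) and that the inequalities all point the right way after clearing the ceilings.
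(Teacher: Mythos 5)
Your proposal is correct and matches the paper's own proof: both take $k=M$, $d_i=p^i$, clear the ceilings via $\lceil x\rceil\geq x$, bound $2+p^{-i}$ by $2+p^{-1}$, and then reduce $\sum_{i=1}^{M}p^{-2i}$ to its first term (the paper does this by summing the geometric series and noting the ratio factor is at least $1$, which is the same estimate).
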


\begin{proof}
Let $k = M$ and $d_i = p ^{i}$ for $1\leq i\leq M$. 
Note that 
\begin{align*}
\sum_{i=1}^k 
\left(\left\lceil \frac n{d_i(2d_i + 1)} \right\rceil  -1\right)
& \geq 
\sum_{i=1}^M 
\left(\frac n{d_i(2d_i + 1)}   -1\right)
\\
& \geq 
\sum_{i=1}^M 
\frac n{(2 + \frac 1{p})d_i^2}  -  M
\\
& =  
\frac n{2 + \frac 1{p}} 
\sum_{i=1}^M 
\frac 1{d_i^2}  -  M
\\
& =  
\frac n{2 + \frac 1{p}} \frac 1{p^2} \frac {1 - \frac 1{p^{2M}}}{1 -\frac 1{p^2}} -  M
\\
& 
\geq 
\frac n{p^2(2 + \frac 1{p})}- M .
\end{align*}
\end{proof}

\begin{proposition}
\label{Prop:Qn3Ans}
Let $\{n_k\}_{k\geq 1}$ be a sequence of positive integers. Assume that no term of this sequence gets repeated infinitely often, i.e., it does not admits any constant subsequence. Let $\calP$ be a finite set of primes such that all the prime divisors of any term of this sequence lie in this set. 
Then for any $*\in \{\cyc, \ab, \nil, \ssol, \sol, \emptyset\}$, 
$$|(\{1, 2, \cdots, \lfloor 2n_k/3\rfloor \} \setminus \calA_{n_k}^*)| 
\geq 
\frac {n_k} {(\max \calP)^2(2 + \frac 1{\min \calP})} - \frac{\log n_k}{\log \min \calP}
$$
holds for large enough $k$, and consequently, 
$$\lim_{k\to \infty} |(\{1, 2, \cdots, \lfloor 2n_k/3\rfloor \} \setminus \calA_{n_k}^*)|  = \infty.$$
\end{proposition}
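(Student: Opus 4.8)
The plan is to reduce the statement to the three lemmas already established (Lemmas~\ref{Lemma:B1}, \ref{Lemma:B2}, \ref{Lemma:Bdd}), and the only real work is an existence argument: for each term $n_k$ we must locate, inside \emph{every} $*$-group $G$ of order $n_k$, a chain of divisors $d_1 \mid d_2 \mid \cdots \mid d_m$ of $n_k$ with $d_1 > 1$ such that $G$ contains a subgroup of index $d_i$ for each $i$, and such that the resulting lower bound from Lemma~\ref{Lemma:Bdd} is the claimed quantity. Since $\calA_{n_k}^* = \bigcap_G \calA(G)$ over $*$-groups $G$ of order $n_k$, any single such $G$ gives $\{1,\dots,\lfloor 2n_k/3\rfloor\}\setminus \calA_{n_k}^* \supseteq \{1,\dots,\lfloor 2n_k/3\rfloor\}\setminus \calA(G)$, so it suffices to exhibit the chain in one group per order, but to get a bound uniform in $*$ we want it in an arbitrary group of that order.

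First I would fix $k$ and pick a prime $p \in \calP$ and the exponent $M = M_k := v_p(n_k)$, the $p$-adic valuation of $n_k$. By Cauchy's theorem applied iteratively (or by Sylow theory: a Sylow $p$-subgroup $P$ of $G$ has order $p^M$, and a finite $p$-group has a subgroup of every order dividing $|P|$, hence $G$ has a subgroup of index $p^i$ for every $0 \le i \le M$), every group $G$ of order $n_k$ contains subgroups of index $p, p^2, \dots, p^M$. Thus the chain $d_i = p^i$, $1 \le i \le M$, satisfies the hypotheses of Lemma~\ref{Lemma:B2} for \emph{any} group of order $n_k$, and Lemma~\ref{Lemma:Bdd} then yields
$$
|(\{1,\dots, n_k\}\setminus \calA(G))| \ge \sum_{i=1}^{M} \left(\left\lceil \tfrac{n_k}{d_i(2d_i+1)}\right\rceil - 1\right) \ge \frac{n_k}{p^2(2 + \frac1p)} - M.
$$
Next I would observe that all of the elements produced, namely the sets $B_i$ of Lemma~\ref{Lemma:B1}, consist of integers of the form $\tfrac{n_k}{d_i} - r$ with $d_i \ge p \ge 2$ and $r \ge 1$, hence are $\le \tfrac{n_k}{2} - 1 < \lfloor 2n_k/3 \rfloor$; so the lower bound in fact counts elements of $\{1,\dots,\lfloor 2n_k/3\rfloor\}\setminus \calA(G)$. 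To optimize the constant, choose $p = \max\calP$ in the term $\tfrac{n_k}{p^2(2+1/p)}$ — wait, that makes it smaller; rather, one chooses $p$ to \emph{maximize} $\tfrac{n_k}{p^2(2+1/p)} - v_p(n_k)$, but for a clean uniform statement it is enough to bound $p^2(2+\tfrac1p) \le (\max\calP)^2(2 + \tfrac1{\min\calP})$ for \emph{any} $p\in\calP$ dividing $n_k$, and to bound $M = v_p(n_k) \le \log_p n_k \le \log n_k / \log(\min\calP)$, giving exactly the inequality in the statement (for $k$ large enough that $n_k \ge 2$, say, so that $\lfloor 2n_k/3\rfloor$ and the ceilings behave).

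The final limiting assertion then follows once I show $n_k \to \infty$. This is where the hypotheses ``no constant subsequence'' and ``$\calP$ finite'' are used: the set of positive integers all of whose prime factors lie in the finite set $\calP$, and which are $\le X$, is finite for every $X$ (it is contained in the set of $\prod_{p\in\calP} p^{a_p}$ with each $p^{a_p} \le X$); so if $n_k$ did not tend to infinity, some value $\le X$ would be attained infinitely often, contradicting the no-constant-subsequence hypothesis. Hence $n_k\to\infty$, the right-hand side $\tfrac{n_k}{(\max\calP)^2(2+1/\min\calP)} - \tfrac{\log n_k}{\log\min\calP}$ diverges (linear growth beats logarithmic), and the conclusion follows. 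The main obstacle is purely the uniformity over all groups $G$ of order $n_k$ (and hence over $*$): this is handled, as above, by the fact that a Sylow $p$-subgroup always supplies subgroups of index $p^i$ for all $i \le v_p(n_k)$, so no class-specific structure ($\cyc$, $\ab$, $\nil$, $\ssol$, $\sol$) is actually needed — the bound holds already for $* = \emptyset$ and a fortiori for all the others via \eqref{Eqn:Inclu*}.
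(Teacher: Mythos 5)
There is a genuine gap, concentrated in the group-theoretic existence step. You claim that every group $G$ of order $n_k$ has a subgroup of \emph{index} $p^i$ for each $0\le i\le M=v_p(n_k)$, justified by the fact that a Sylow $p$-subgroup has subgroups of every order dividing $p^M$. But that fact supplies subgroups of \emph{order} $p^i$, i.e.\ of index $n_k/p^i$, whereas Lemma \ref{Lemma:B2} requires subgroups of size $n_k/d_i=n_k/p^i$, i.e.\ of index $p^i$. These are not the same unless $n_k$ is itself a power of $p$, and the claim is false in general: $A_5$ has order $60$ with $v_2(60)=2$, yet it has no subgroup of index $2$ (order $30$) or of index $4$ (order $15$). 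So the ``uniform over all groups of order $n_k$'' version of the bound is not established by your argument. Relatedly, your closing reduction runs in the wrong direction: Equation \eqref{Eqn:Inclu*} gives $\calA_{n}\subseteq\calA_{n}^{*}\subseteq\calA_{n}^{\cyc}$, so the complement of $\calA_n^{\cyc}$ is the \emph{smallest} of the complements; a lower bound for $*=\emptyset$ does not transfer ``a fortiori'' to $*=\cyc$. The paper reduces to $*=\cyc$ precisely for this reason, and then the needed subgroups exist trivially because the cyclic group of order $n_k$ has a (unique) subgroup of every order dividing $n_k$. Since the cyclic group lies in every class $*$, this single group already yields the bound for all $*$ via $\calA_{n_k}^{*}\subseteq\calA(G)$ — the extra uniformity over arbitrary groups that you were aiming for is never needed.

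The remainder of your argument is sound and matches the paper: the choice $d_i=p^i$, the estimates $v_p(n_k)\le \log n_k/\log\min\calP$ and $p^2(2+\tfrac1p)\le(\max\calP)^2(2+\tfrac1{\min\calP})$, the observation that the elements of the sets $B_i$ are at most $n_k/2-1\le\lfloor 2n_k/3\rfloor$ because $d_1>1$, and the deduction that $n_k\to\infty$ from the finiteness of $\calP$ together with the no-constant-subsequence hypothesis. With the reduction redirected to the cyclic group, the proof closes exactly as in the paper.
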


\begin{proof}
By Equation \eqref{Eqn:Inclu*}, it suffices to prove the above inequality for $* = \cyc$. 
Since the terms of the sequence $\{n_k\}_{k\geq 1}$ have prime factors from a finite set of primes and no term of this sequence gets repeated infinitely often, it follows that for any $M> 0$, there exists a positive integer $K$ such that for each $k\geq K$, 
the integer $n_k$ is divisible by $p_k^M$ for some $p_k\in\calP$.  
By Lemmas \ref{Lemma:B1}, \ref{Lemma:B2}, \ref{Lemma:Bdd}, 
for any $k\geq K$, it follows that 
there exists a subset $B$ of $\{1, 2, \cdots, \lfloor 2n_k/3\rfloor \}$ containing at least 
\begin{align*}
\frac {n_k} {p_k^2(2 + \frac 1{p_k})} - M
& = 
\frac {n_k} {p_k^2(2 + \frac 1{p_k})} - \frac{\log n_k}{\log p_k}\\
& \geq 
\frac {n_k} {(\max \calP)^2(2 + \frac 1{\min \calP})} - \frac{\log n_k}{\log \min \calP}
\end{align*}
many elements such that $\calA(G)$ avoids $B$ for any nilpotent group $G$ of order $n_k$. This establishes the result.
\end{proof}

Using Equation \eqref{Eqn:SnCyc}, we partially answer Question \ref{Qn:Measure} in Proposition \ref{Prop:Qn4Ans}. 

\begin{proposition}
\label{Prop:Qn4Ans}
For $\calX = \calS, * = \cyc$, 
$$
\lim_{n\to \infty} 
\frac{(a, b) \cap \frac 1n \calX_n^*}
{|\calX_n^*|} = \frac 32( b-a),
$$
holds for $0 \leq a < b \leq \frac 23$, and Question \ref{Qn:Measure}(ii) admits an answer in the affirmative. 
\end{proposition}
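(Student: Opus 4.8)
The plan is to reduce the statement to an elementary lattice-point count using the explicit description of $\calS_n^{\cyc}$ recorded in \eqref{Eqn:SnCyc}. First I would note that $|\calS_n^{\cyc}| = \lfloor 2n/3\rfloor + 1 = \tfrac23 n + O(1)$ for $n \geq 2$. Next, fixing $0 \leq a < b \leq \tfrac23$, I would observe that since $b \leq \tfrac23 < 1$ the point $1 = n/n$ of $\tfrac1n\calS_n^{\cyc}$ does not lie in $(a,b)$, so $(a,b) \cap \tfrac1n\calS_n^{\cyc}$ is exactly the set of fractions $j/n$ with $j \in \bbZ$, $1 \leq j \leq \lfloor 2n/3\rfloor$ and $an < j < bn$; moreover the constraint $j \leq \lfloor 2n/3\rfloor$ is automatic, since $j < bn \leq \tfrac23 n$ already forces $j \leq \lfloor 2n/3\rfloor$. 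Hence $\bigl|(a,b)\cap\tfrac1n\calS_n^{\cyc}\bigr|$ equals the number of integers strictly between $an$ and $bn$, which the usual floor/ceiling estimates bound by $(b-a)n + O(1)$. Dividing the two estimates gives
$$
\lim_{n\to\infty}\frac{\bigl|(a,b)\cap\tfrac1n\calS_n^{\cyc}\bigr|}{|\calS_n^{\cyc}|}
= \lim_{n\to\infty}\frac{(b-a)n + O(1)}{\tfrac23 n + O(1)} = \frac{3}{2}(b-a),
$$
which is the first assertion (the case $a = 0$ being handled by the same computation).

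For the affirmative answer to Question \ref{Qn:Measure}(ii), I would take $\mu$ to be the uniform probability measure on $[0, \tfrac23]$, that is, $\tfrac32$ times Lebesgue measure restricted to $[0,\tfrac23]$; this is a probability measure since $\tfrac32\cdot\tfrac23 = 1$, and it is non-atomic, so $\int_{[0,2/3]}\chi_{[a,b]}\,d\mu = \mu([a,b]) = \tfrac32(b-a)$ for all $0 \leq a < b \leq \tfrac23$. Since $[a,b]$ and $(a,b)$ differ only in their two endpoints, $\bigl|[a,b]\cap\tfrac1n\calS_n^{\cyc}\bigr|$ and $\bigl|(a,b)\cap\tfrac1n\calS_n^{\cyc}\bigr|$ differ by at most $2$, so after dividing by $|\calS_n^{\cyc}|\to\infty$ the limit computed above is unchanged, and
$$
\lim_{n\to\infty}\frac{\bigl|[a,b]\cap\tfrac1n\calS_n^{\cyc}\bigr|}{|\calS_n^{\cyc}|} = \frac32(b-a) = \int_{[0,2/3]}\chi_{[a,b]}\,d\mu,
$$
which is exactly the conclusion sought in Question \ref{Qn:Measure}(ii) for $\calX = \calS$, $* = \cyc$.

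There is no serious obstacle: once \eqref{Eqn:SnCyc} is invoked the argument is a bookkeeping exercise. The only points needing a little care are the handling of strict inequalities and floors/ceilings in the lattice-point count, the remark that the upper cap $j \leq \lfloor 2n/3\rfloor$ is automatically satisfied, and the verification that the candidate measure is a probability measure.
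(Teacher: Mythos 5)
Your proof is correct and follows the same route as the paper: both deduce the limit directly from the explicit description $\calS_n^{\cyc}=\{1,\dots,\lfloor 2n/3\rfloor, n\}$ in Equation \eqref{Eqn:SnCyc} and take $\mu$ to be the normalized Lebesgue measure on $[0,\tfrac23]$. The paper leaves the lattice-point count as an unstated ``it follows,'' whereas you supply the bookkeeping (including the $[a,b]$ versus $(a,b)$ discrepancy between the Proposition and Question \ref{Qn:Measure}(ii)), which is a welcome level of detail.
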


\begin{proof}
From Equation \eqref{Eqn:SnCyc}, it follows that 
$$
\lim_{n\to \infty} 
\frac{(a, b) \cap \frac 1n \calX_n^*}
{|\calX_n^*|} = \frac 32( b-a).
$$
Note that for the probability measure $\mu$ corresponding to the Lebesgue measure on $[0, \frac 23]$, it follows that 
$$
\lim_{n\to \infty} 
\frac{(a, b) \cap \frac 1n \calX_n^*}
{|\calX_n^*|}
= \int_{[0, \frac 23]} \chi_{[a, b]} d\mu
$$
for any $0 \leq a < b \leq \frac 23$.
This answers part (ii). 
\end{proof}

Using Proposition \ref{Prop:fini}, we prove the following lemma, and then establish Proposition \ref{Prop:OpenAvoidance} which partially answers part (i) of Question \ref{Qn:BddMinCompDetailed}. 
\begin{lemma}
\label{Lemma:OpenAvoidance}
Let $G$ be a group of order $n$. Let $i$ be an integer such that $G$ admits a subgroup of  index $i$. 
Then 
$$\left(\frac 2{2i+1} , \frac 1i\right) \cap \frac 1{|G|} \calA(G) = \emptyset.$$
\end{lemma}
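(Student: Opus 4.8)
The plan is to apply Proposition \ref{Prop:fini} with $H$ taken to be the subgroup of index $i$ guaranteed by the hypothesis, so that $|H| = n/i$ and $[G:H] = i$. Suppose, for contradiction, that some element of $\left(\frac{2}{2i+1}, \frac 1i\right)$ lies in $\frac 1n \calA(G)$; that is, there is an integer $m$ with $1 \le m \le n$ such that $\frac mn \in \left(\frac{2}{2i+1}, \frac 1i\right)$ and $G$ has a minimal complement of size $m$. The inequality $\frac mn < \frac 1i$ gives $m < n/i = |H|$, and the inequality $\frac mn > \frac{2}{2i+1}$ rearranges to $m(2i+1) > 2n$, i.e. $2mi > 2n - m$, i.e. $2(n/i - m)\cdot i \cdot \tfrac{1}{?}$ — more cleanly, $2[G:H]\,(|H| - m) = 2i(n/i - m) = 2n - 2mi < m$. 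Thus $m$ is an integer with $|H| > m > 2[G:H]\,(|H| - m)$.

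The next step is to realize such an $m$-element set inside $H$. If $C$ is a size-$m$ subset of $G$ that is a minimal complement in $G$, that by itself does not place $C$ inside $H$; instead I would invoke the hypothesis the other way: $m \in \calA(G)$ means \emph{every} subset of $G$ of size $m$ is a minimal complement, in particular every $m$-element subset $C$ of the subgroup $H$. Fix any such $C \subseteq H$ with $|C| = m$ (possible since $m < |H|$, in fact $m \le |H|$ suffices with $m \le n$). Then $C$ satisfies $|H| > |C| = m > 2[G:H]\,|H \setminus C|$ because $|H\setminus C| = |H| - m$ and we just derived $m > 2[G:H](|H|-m)$. Proposition \ref{Prop:fini} now says $C$ is not a minimal complement to any subset of $G$, contradicting $m \in \calA(G)$. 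Hence no such $m$ exists, and $\left(\frac 2{2i+1}, \frac 1i\right) \cap \frac 1{|G|}\calA(G) = \emptyset$.

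The only delicate point — and the step I expect to require the most care in writing up — is the elementary manipulation converting the two open-interval inequalities $\frac{2}{2i+1} < \frac mn < \frac 1i$ into the chain $|H| > m > 2[G:H]\,|H\setminus C|$; one must track that $n = i|H|$ and that $|H\setminus C| = |H| - m$, and confirm the strict inequalities survive (they do, since both endpoints are excluded). Everything else is a direct citation of Proposition \ref{Prop:fini} together with the definition of $\calA(G)$, so no further machinery is needed.
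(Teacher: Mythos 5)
Your proof is correct and follows essentially the same route as the paper: choose a subgroup $H$ of index $i$, translate the interval condition on $m/n$ into the chain $|H| > m > 2[G:H]\,|H\setminus C|$ for an $m$-element subset $C\subseteq H$, and apply Proposition \ref{Prop:fini} together with the ``every subset of size $m$'' meaning of $\calA(G)$. The only difference is cosmetic (contradiction versus direct exclusion of each such $m$), and your arithmetic checks out.
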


\begin{proof}
Let $H$ be a subgroup of $G$ of index $i$. 
For any integer $m$ satisfying
$$\frac {2i}{2i+1} |H| < m < |H|,$$
it follows from Proposition \ref{Prop:fini} that no subset of $H$ containing $m$ elements is a minimal complement in $G$. Thus 
$$\left(\frac {2i}{2i+1} |H|, |H|\right) \cap \calA(G) = \emptyset,$$ 
which yields the result. 
\end{proof}

\begin{proposition}
\label{Prop:OpenAvoidance}
For $*\in \{\cyc, \ab, \nil, \ssol, \sol, \emptyset\}$, 
$$
\bigcup_{i=0}^\infty \left(\frac 2{2p^i+1} , \frac 1{p^i}\right) 
$$ 
does not intersect with $\calA_{p^n}^*$ for any $n\geq 0$. 

\end{proposition}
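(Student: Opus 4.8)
The plan is to reduce \textbf{Proposition \ref{Prop:OpenAvoidance}} to \textbf{Lemma \ref{Lemma:OpenAvoidance}} by exhibiting, for each group $G$ of order $p^n$ in the relevant class $*$, subgroups of the appropriate indices. First I would observe that by the inclusion \eqref{Eqn:Inclu*}, namely $\calA_{p^n} \subseteq \calA_{p^n}^\sol \subseteq \cdots \subseteq \calA_{p^n}^\cyc$, it suffices to prove the statement for $* = \cyc$; that is, it is enough to show that for every $n \geq 0$ and every cyclic group $G$ of order $p^n$, the union $\bigcup_{i=0}^\infty \left(\frac{2}{2p^i+1}, \frac 1{p^i}\right)$ is disjoint from $\frac 1{|G|}\calA(G)$. (One could equally phrase this for arbitrary groups of order $p^n$, since every $p$-group has subgroups of every order dividing $p^n$ by the standard theory of $p$-groups; but reducing to $\cyc$ via \eqref{Eqn:Inclu*} is the cleanest route and a cyclic group of order $p^n$ visibly has a subgroup of index $p^i$ for each $0 \leq i \leq n$.)

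Next I would fix such a $G$ and run through the exponents $i$. For $0 \leq i \leq n$, the group $G$ admits a subgroup of index $p^i$, so \textbf{Lemma \ref{Lemma:OpenAvoidance}} (applied with the integer ``$i$'' there equal to $p^i$) gives $\left(\frac{2}{2p^i+1}, \frac 1{p^i}\right) \cap \frac 1{|G|}\calA(G) = \emptyset$. For $i > n$ the interval $\left(\frac{2}{2p^i+1}, \frac 1{p^i}\right)$ is contained in $\left(0, \frac 1{p^n}\right) = \left(0, \frac 1{|G|}\right)$, and since $\frac 1{|G|}\calA(G) \subseteq \left\{\frac 1{|G|}, \frac 2{|G|}, \dots, 1\right\}$ has smallest element $\frac 1{|G|} = \frac 1{p^n}$, such an interval meets $\frac 1{|G|}\calA(G)$ only if it contains $\frac 1{p^n}$ — which it does not, as $\frac 1{p^i} < \frac 1{p^n}$. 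Hence every interval in the union is disjoint from $\frac 1{|G|}\calA(G)$, so the whole union is, and since $G$ was an arbitrary cyclic group of order $p^n$, intersecting over all of them yields disjointness from $\calA_{p^n}^\cyc$, and therefore from $\calA_{p^n}^*$ for every $*$ in the list.

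I do not anticipate a genuine obstacle here: the content is entirely in \textbf{Lemma \ref{Lemma:OpenAvoidance}} (hence in \textbf{Proposition \ref{Prop:fini}}), and the proposition is a packaging statement that collates the disjoint intervals coming from the chain of subgroups of $p$-power index together with the trivial observation that intervals below $\frac 1{|G|}$ are automatically missed. The only point requiring a line of care is the $i > n$ case, i.e.\ checking that the intervals lying to the left of the smallest attainable density are vacuously disjoint from $\frac 1{|G|}\calA(G)$; this is where one uses that $\calA(G)$ consists of integers between $1$ and $|G|$, so its normalization has no points in $(0, 1/|G|)$. Everything else is a direct invocation of \textbf{Lemma \ref{Lemma:OpenAvoidance}} for the finitely many indices $p^0, p^1, \dots, p^n$.
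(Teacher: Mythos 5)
Your proof is correct and follows essentially the same route as the paper's: invoke Lemma \ref{Lemma:OpenAvoidance} for the indices $p^0,\dots,p^n$ and dispose of the intervals with $i>n$ by noting that the smallest element of $\frac 1{|G|}\calA(G)$ is $\frac 1{|G|}$. The only (immaterial) difference is that you first reduce to $*=\cyc$ via \eqref{Eqn:Inclu*}, whereas the paper applies the lemma directly to an arbitrary group of order $p^n$, implicitly using that every $p$-group has subgroups of index $p^i$ for all $0\leq i\leq n$.
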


\begin{proof}
For any group $G$ of order $p^n$, Lemma \ref{Lemma:OpenAvoidance} implies that the set 
$$
\bigcup_{i=0}^n \left(\frac 2{2p^i+1} , \frac 1{p^i}\right) 
$$ 
does not intersect with 
$\frac 1{|G|} \calA(G)$. Since the smallest element of $\frac 1{|G|} \calA(G)$ is $\frac 1{|G|}$, it follows that 
$$
\bigcup_{i=0}^\infty \left(\frac 2{2p^i+1} , \frac 1{p^i}\right) 
$$ 
does not intersect with 
$\frac 1{|G|} \calA(G)$. 
Hence 
$$
\bigcup_{i=0}^\infty \left(\frac 2{2p^i+1} , \frac 1{p^i}\right) 
$$ 
does not intersect with none of 
$\calA_{p^n}^*$
for any $*\in \{\cyc, \ab, \nil, \ssol, \sol, \emptyset\}$. 
\end{proof}

Note that it follows from Equation \eqref{Eqn:SnCyc} that given any nonempty open subset $U$ of $[0, \frac 23]$, it has nonempty intersection with $\frac 1n \calS_n^\cyc$ for large enough $n$. 
This partially answers part (ii) of Question \ref{Qn:BddMinCompDetailed}.

\section{Asymptotic behaviour of co-minimal pairs}
\label{Sec:AsyCoMin}

\begin{definition}\label{Def:CoMin}
A pair $(A, B)$ of two nonempty subsets $A, B$ of a group $G$ is called a co-minimal pair if $A \cdot B = G$, and $A' \cdot B \subsetneq G$ for any $\emptyset \neq A' \subsetneq A$ and $A\cdot B' \subsetneq G$ for any $\emptyset \neq B' \subsetneq B$. 
\end{definition}

For any finite group $G$, let 
$\calS_2(G)$
denote the 
set of pairs of the form $(a, b)$ such that there is a co-minimal pair $(A, B)$ in $G$ with $|A| = a, |B| = b$. 
For $*\in \{\cyc, \ab, \nil, \ssol, \sol, \emptyset\}$ and for any positive integer $n$, consider the following subset of $\{1, 2, \cdots, n\}\times \{1, 2, \cdots, n\}$ defined as follows.  
\begin{align*}
\calS_{2, n}^*
& := \bigcap_{G \text{ is a $*$-group of order }n}\calS_2(G).
\end{align*}
The set $\calS_{2,n}^\emptyset$ is also denoted by $\calS_{2,n}$. 
By Theorem \ref{Thm:SmallWorks}, it follows that 
$$\lim_{n\to \infty} |\calS_{2,n}^*| =  \infty$$
for $*\in \{\cyc, \ab, \nil, \ssol, \sol, \emptyset\}$. 

\begin{questionIntro}
\label{Qn:XnGetsLarger2}
For $*\in \{\cyc, \ab, \nil, \ssol, \sol, \emptyset\}$, describe the asymptotic property of the sequence $|\calS_{2,n}^*|$. 
\end{questionIntro}

Let $I$ denote the unit interval $[0,1]$ and $I^2$ denote the unit square $[0,1]\times [0,1]$. The square $[0, 1/2]\times [0, 1/2]$ is denoted by $I^2_{1/2}$. 
It follows from Proposition \ref{Prop:23rdBdd} that for a co-minimal pair $(A, B)$ in any finite group $G$, 
$$
2|A||B|  - |A| \leq |G| |B|
$$
holds. 
So, for any $*\in \{\cyc, \ab, \nil, \ssol, \sol, \emptyset\}$ and any $(x, y)\in \frac 1n \calS_{2,n}^*$, 
$$\left(x - \frac 1{2n} \right)
\left(y - \frac 12\right) \leq \frac 1 {4n}$$
and 
$$\left(x - \frac 1{2} \right)
\left(y - \frac 1{2n}\right) \leq \frac 1 {4n}$$
hold. For $n\geq 1$, define 
$$U_n 
= 
\left\{(x, y) \,|\, 0 \leq x \leq 1 , 0 \leq y \leq 1, 
2xy 
\leq x + \frac 1n y, 
2xy 
\leq y + \frac 1n x\right\}.$$
Note that $U_n$ contains $\frac 1n \calS_{2,n}^*$ for any $n\geq 1$ and any $*\in \{\cyc,\ab, \nil, \ssol, \sol, \emptyset\}$.

\begin{lemma}
\label{Lemma:Avoidance}
For any $0 < \varepsilon < 1/2$, let $R_\varepsilon$ denote the subset of $I^2$ defined by 
$$
R_\varepsilon = 
([\varepsilon, 1] \times [1/2 + \varepsilon, 1] )
\cup 
([1/2 + \varepsilon, 1] \times [\varepsilon, 1] ).
$$
The region $R_\varepsilon$ does not intersect with $U_n$ for large enough $n$. 
\end{lemma}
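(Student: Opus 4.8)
\textbf{Proof plan for Lemma \ref{Lemma:Avoidance}.}
The plan is to show that any point $(x,y)$ in $R_\varepsilon$ violates at least one of the two defining inequalities of $U_n$ once $n$ is large enough, with the threshold on $n$ depending only on $\varepsilon$. By the symmetry of both $R_\varepsilon$ and the pair of inequalities under swapping the two coordinates, it suffices to treat the case $(x,y)\in [1/2+\varepsilon,1]\times[\varepsilon,1]$ and to derive a contradiction with the inequality $2xy \leq y + \frac 1n x$ (the second defining inequality of $U_n$); the other strip is handled by the symmetric argument using $2xy \leq x + \frac 1n y$.

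First I would rewrite $2xy \leq y + \frac 1n x$ as $y(2x - 1) \leq \frac 1n x$, i.e. $y \leq \frac{x}{n(2x-1)}$, which is legitimate since $x \geq 1/2 + \varepsilon$ forces $2x - 1 \geq 2\varepsilon > 0$. On $R_\varepsilon$ we have $x \leq 1$ and $2x - 1 \geq 2\varepsilon$, so the right-hand side is at most $\frac{1}{2n\varepsilon}$. On the other hand, $(x,y)\in R_\varepsilon$ in this strip gives $y \geq \varepsilon$. Hence membership in both $R_\varepsilon$ and $U_n$ would force $\varepsilon \leq \frac{1}{2n\varepsilon}$, i.e. $n \leq \frac{1}{2\varepsilon^2}$. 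Therefore for every $n > \frac{1}{2\varepsilon^2}$ the strip $[1/2+\varepsilon,1]\times[\varepsilon,1]$ is disjoint from $U_n$, and by symmetry so is $[\varepsilon,1]\times[1/2+\varepsilon,1]$; thus $R_\varepsilon \cap U_n = \emptyset$ for all such $n$, which is the claim.

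I do not anticipate a genuine obstacle here: the estimate is elementary and the only point requiring a moment of care is checking that the quantity $2x-1$ is bounded below by a positive constant on the relevant strip, so that dividing through is valid and the resulting bound on $y$ is uniform in $x$. The symmetry reduction should be stated explicitly (swap coordinates; the set $R_\varepsilon$ and the conjunction of the two inequalities in the definition of $U_n$ are both invariant), so that only one of the two cases needs to be written out in detail.
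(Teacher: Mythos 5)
Your proposal is correct and takes essentially the same approach as the paper: both show that each of the two rectangles making up $R_\varepsilon$ violates the corresponding defining inequality of $U_n$ once $n$ exceeds a threshold depending only on $\varepsilon$. The only difference is cosmetic — the paper works with the factored form $(x-\tfrac12)(y-\tfrac1{2n})\leq\tfrac1{4n}$ and an implicit threshold $N$ with $\tfrac1{2N}+\tfrac1{2\sqrt N}<\varepsilon$, while you solve $y(2x-1)\leq\tfrac xn$ directly and obtain the explicit (and slightly sharper) threshold $n>\tfrac1{2\varepsilon^2}$.
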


\begin{proof}
Let $N$ be a positive integer such that $1/2N + 1/2\sqrt N < \varepsilon$. 
Let $(x, y)$ be an element of $R_\varepsilon$. 
If $(x, y)$ lies in $[\varepsilon, 1] \times [1/2 + \varepsilon, 1]$, then for any $n\geq N$, 
\begin{align*}
\left(x - \frac 1{2n} \right)\left(y - \frac 12 \right) 
& \geq 
\varepsilon 
\left(x - \frac 1{2n} \right)\\
& \geq 
\varepsilon 
\left(x - \frac 1{2N} \right)\\
& > 
\frac 1{2\sqrt N} \cdot \frac 1{2\sqrt N} \\
& \geq  \frac 1 {4n}.
\end{align*}
If $(x, y)$ lies in $[1/2 + \varepsilon, 1] \times [\varepsilon, 1]$, then for any $n\geq N$, 
\begin{align*}
\left(x - \frac 12 \right)\left(y - \frac 1{2n}\right) 
& \geq 
\varepsilon 
\left(y - \frac 1{2n} \right)\\
& \geq 
\varepsilon 
\left(y - \frac 1{2N} \right)\\
& > 
\frac 1{2\sqrt N} \cdot \frac 1{2\sqrt N} \\
& \geq  \frac 1 {4n}.
\end{align*}
So no element of $R_\varepsilon$ lies in $U_n$ for $n\geq N$. 
\end{proof}

Note that for any $\varepsilon >0$ and $*\in \{\cyc,\ab, \nil, \ssol, \sol, \emptyset\}$, the set $R_\varepsilon$ avoids  
$\frac 1n \calS_{2,n}^*$ for large enough $n$.

\begin{questionIntro}
\label{Qn:XnComplementGetsLarger2}
For $*\in \{\cyc, \ab, \nil, \ssol, \sol, \emptyset\}$ and for $0 < \varepsilon < 1/2$, determine the asymptotic property of the sequence 
$$ |(([1/n, 2/n, \cdots, n/n]^2\setminus R_\varepsilon)\setminus \calS_{2,n}^*)|.$$
\end{questionIntro}

Note that 
$$|(([1/n, 2/n, \cdots, n/n]^2\setminus R_\varepsilon)\setminus \calS_{2,n}^*)|
\geq 
\left\lfloor \frac n2  \right\rfloor^2 -1$$
since $(1/n, i/n), (i/n, 1/n)$ belongs to $([1/n, 2/n, \cdots, n/n]^2\setminus R_\varepsilon)\setminus \calS_{2,n}^*$ for $1\leq i \leq \lfloor n/2\rfloor$. Thus a more interesting question would be to study the asymptotic property of the sequence 
$$ |(([1/n, 2/n, \cdots, n/n]^2\setminus (R_\varepsilon \cup R'_n))\setminus \calS_{2,n}^*)|$$
where $R'_n$ consists of those points of $[1/n, 2/n, \cdots, n/n]^2$ which are avoided by $\calS_{2,n}^*$ for ``obvious reasons''. Thus, $R'_n$ contains those points satisfying $xy < 1/n$.  

\begin{questionIntro}
\label{Qn:Measure2}
Let $*\in \{\cyc, \ab, \nil, \ssol, \sol, \emptyset\}$. 
\begin{enumerate}[(i)]
\item 
Let $0 \leq a < b \leq 1, 0 \leq c < d \leq 1$. Evaluate 
$$
\limsup \frac{([a, b]\times [c,d]) \cap \frac 1n \calS_{2,n}^*}
{|\calS_{2,n}^*|}, 
\liminf \frac{([a, b]\times [c,d]) \cap \frac 1n \calS_{2,n}^*}
{|\calS_{2,n}^*|}.
$$
Does the sequence 
$$
\frac{([a, b]\times [c,d]) \cap \frac 1n \calS_{2,n}^*}
{|\calS_{2,n}^*|}
$$
converge? Otherwise, what are its subsequential limits?

\item 
Does there exist a probability measure $\mu$ on $I^2$ such that 
$$
\lim_{n\to \infty} 
\frac{([a, b] \times [c,d])\cap \frac 1n \calS_{2,n}^*}
{|\calS_{2,n}^*|}
= \int_{I^2} \chi_{[a, b]\times [c,d]} d\mu
$$
for any $0 \leq a < b \leq 1, 0 \leq c < d \leq 1$, where $\chi_A$ denotes the characteristic function of $A$ for $A\subseteq I^2$?
\end{enumerate}
\end{questionIntro}

\begin{questionIntro}
\label{Qn:BddMinCompDetailed2}
Let $*\in \{\cyc, \ab, \nil, \ssol, \sol, \emptyset\}$. 
\begin{enumerate}[(i)]

\item 
Determine the open subsets of $I^2$ which do not intersect with $\frac 1n \calS_{2,n}^*$ for any/large enough/infinitely many $n$. 

\item 
Determine the open subsets of $I^2$ which have nonempty intersection with $\frac 1n \calS_{2,n}^*$ for any/large enough/infinitely many $n$.

\end{enumerate}

\end{questionIntro}

Note that Lemma \ref{Lemma:Avoidance} partially answers part (i) of the above question.

\begin{definition}
A $k$-tuple $(A_1, \cdots, A_k)$ of non-empty subsets of a group $G$ is said to be a co-minimal $k$-tuple if 
$$A_1 \cdot A_2 \cdot \cdots \cdot A_k = G$$
and for any $1\leq i \leq k$ and for any $a_i \in A_i$, 
$$A_1 \cdot A_2 \cdot \cdots \cdot(A_i \setminus \{a_i\}) \cdot \cdots \cdot A_k \neq G.$$
\end{definition}

Note that one can define an analogue of $\calS_{2, n}^*$ as follows. 
Let $k\geq 2$ be an integer. 
For any finite group $G$, let 
$\calS_k(G)$
denote the 
set of pairs of the form $(a_1, \cdots, a_k)$ such that there is a co-minimal $k$-tuple $(A_1, \cdots, A_k)$ in $G$ with $|A_i| = a_i$ for $1\leq i \leq k$. 
For $*\in \{\cyc, \ab, \nil, \ssol, \sol, \emptyset\}$ and for any positive integer $n$, consider the following subset of $\{1, 2, \cdots, n\}^k$ defined as follows.  
\begin{align*}
\calS_{k, n}^*
& := \bigcap_{G \text{ is a $*$-group of order }n}\calS_k(G).
\end{align*}
The set $\calS_{k,n}^\emptyset$ is also denoted by $\calS_{k,n}$. 
By Theorem \ref{Thm:SmallWorks}, it follows that 
$$\lim_{n\to \infty} |\calS_{k,n}^*| =  \infty$$
for $*\in \{\cyc, \ab, \nil, \ssol, \sol, \emptyset\}$. 
One could ask the following questions, which are analogous to Questions \ref{Qn:XnGetsLarger2}, 
\ref{Qn:XnComplementGetsLarger2}, 
\ref{Qn:Measure2}, 
\ref{Qn:BddMinCompDetailed2}.

\begin{questionIntro}
For $*\in \{\cyc, \ab, \nil, \ssol, \sol, \emptyset\}$, describe the asymptotic property of the sequence $|\calS_{k,n}^*|$. 
\end{questionIntro}

\begin{questionIntro}
For $*\in \{\cyc, \ab, \nil, \ssol, \sol, \emptyset\}$ and for $0 < \varepsilon < 1/2$, determine the asymptotic property of the sequence 
$$ |(([1/n, 2/n, \cdots, n/n]^k\setminus R_n)\setminus \calS_{k,n}^*)|.$$
where $R_n$ consists of those points of $[1/n, 2/n, \cdots, n/n]^k$ which are avoided by $\calS_{k,n}^*$ for ``obvious reasons''. For instance, $R'_n$ contains those points satisfying $x_1\cdots x_k < 1/n^{k-1}$. 
\end{questionIntro} 

\begin{questionIntro}
Let $*\in \{\cyc, \ab, \nil, \ssol, \sol, \emptyset\}$. 
\begin{enumerate}[(i)]
\item 
Let $a_1, b_1, \cdots, a_k, b_k$ be real numbers satisfying $0 \leq a_i < b_i \leq 1$ for all $1\leq i \leq k$. Evaluate 
$$
\limsup \frac{([a_1, b_1] \times \cdots \times [a_k,b_k]) \cap \frac 1n \calS_{k,n}^*}
{|\calS_{k,n}^*|}, 
\liminf \frac{([a_1, b_1] \times \cdots \times [a_k,b_k]) \cap \frac 1n \calS_{k,n}^*}
{|\calS_{k,n}^*|}.
$$
Does the sequence 
$$
\frac{([a_1, b_1] \times \cdots \times [a_k,b_k]) \cap \frac 1n \calS_{k,n}^*}
{|\calS_{k,n}^*|}
$$
converge? Otherwise, what are its subsequential limits?

	\item 
Does there exist a probability measure $\mu$ on $I^k$ such that 
$$
\lim_{n\to \infty} 
\frac{([a_1, b_1] \times \cdots \times [a_k,b_k])\cap \frac 1n \calS_{k,n}^*}
{|\calS_{k,n}^*|}
= \int_{I^k} \chi_{[a_1, b_1] \times \cdots \times [a_k,b_k]} d\mu
$$
for any real numbers $a_1, b_1, \cdots, a_k, b_k$ satisfying $0 \leq a_i < b_i \leq 1$ for all $1\leq i \leq k$, where $\chi_A$ denotes the characteristic function of $A$ for $A\subseteq I^k$?
\end{enumerate}
\end{questionIntro}

\begin{questionIntro}
Let $*\in \{\cyc, \ab, \nil, \ssol, \sol, \emptyset\}$. 
\begin{enumerate}[(i)]

\item 
Determine the open subsets of $I^k$ which do not intersect with $\frac 1n \calS_{k,n}^*$ for any/large enough/infinitely many $n$. 

\item 
Determine the open subsets of $I^k$ which have nonempty intersection with $\frac 1n \calS_{k,n}^*$ for any/large enough/infinitely many $n$.

\end{enumerate}

\end{questionIntro}

\section{Acknowledgements}
The first author is supported by the ISF Grant no. 662/15. He wishes to thank the Department of Mathematics at the Technion where a part of the work was carried out. The second author would like to acknowledge the Initiation Grant from the Indian Institute of Science Education and Research Bhopal, and the INSPIRE Faculty Award from the Department of Science and Technology, Government of India.

\def\cprime{$'$} \def\Dbar{\leavevmode\lower.6ex\hbox to 0pt{\hskip-.23ex
  \accent"16\hss}D} \def\cfac#1{\ifmmode\setbox7\hbox{$\accent"5E#1$}\else
  \setbox7\hbox{\accent"5E#1}\penalty 10000\relax\fi\raise 1\ht7
  \hbox{\lower1.15ex\hbox to 1\wd7{\hss\accent"13\hss}}\penalty 10000
  \hskip-1\wd7\penalty 10000\box7}
  \def\cftil#1{\ifmmode\setbox7\hbox{$\accent"5E#1$}\else
  \setbox7\hbox{\accent"5E#1}\penalty 10000\relax\fi\raise 1\ht7
  \hbox{\lower1.15ex\hbox to 1\wd7{\hss\accent"7E\hss}}\penalty 10000
  \hskip-1\wd7\penalty 10000\box7}
  \def\polhk#1{\setbox0=\hbox{#1}{\ooalign{\hidewidth
  \lower1.5ex\hbox{`}\hidewidth\crcr\unhbox0}}}
\providecommand{\bysame}{\leavevmode\hbox to3em{\hrulefill}\thinspace}
\providecommand{\MR}{\relax\ifhmode\unskip\space\fi MR }
\providecommand{\MRhref}[2]{%
  \href{http://www.ams.org/mathscinet-getitem?mr=#1}{#2}
}
\providecommand{\href}[2]{#2}


\begin{thebibliography}{AKL20}

\bibitem[AKL20]{AlonKravitzLarson}
Noga Alon, Noah Kravitz, and Matt Larson, \emph{Inverse problems for minimal
  complements and maximal supplements}, Preprint available at
  \url{https://arxiv.org/abs/2006.00534}, 2020.

\bibitem[BL20]{BurcroffLuntzlara}
Amanda Burcroff and Noah Luntzlara, \emph{Sets arising as minimal additive
  complements in the integers}, Preprint available at
  \url{https://arxiv.org/abs/2006.12481}, 2020.

\bibitem[BS]{MinComp1}
Arindam Biswas and Jyoti~Prakash Saha, \emph{On minimal complements in groups},
  Ramanujan J., Accepted for publication, Preprint available at
  \url{https://arxiv.org/abs/1812.10285}.

\bibitem[BS19a]{MinComp2}
\bysame, \emph{Minimal additive complements in finitely generated abelian
  groups}, Preprint available at \url{https://arxiv.org/abs/1902.01363}, 2019.

\bibitem[BS19b]{CoMin1}
\bysame, \emph{On additive co-minimal pairs}, Preprint available at
  \url{https://arxiv.org/abs/1906.05837v3}, 2019.

\bibitem[BS20a]{CoMin2}
\bysame, \emph{Infinite co-minimal pairs in the integers and integral
  lattices}, Preprint available at \url{https://arxiv.org/abs/2005.11095},
  2020.

\bibitem[BS20b]{CoMin3}
\bysame, \emph{Infinite co-minimal pairs involving lacunary sequences and
  generalisations to higher dimensions}, Preprint available at
  \url{https://arxiv.org/abs/2006.02429}, 2020.

\bibitem[BS20c]{CoMin4}
\bysame, \emph{On non-minimal complements}, Preprint available at
  \url{https://arxiv.org/abs/2007.08507}, 2020.

\bibitem[CY12]{ChenYang12}
Yong-Gao Chen and Quan-Hui Yang, \emph{On a problem of {N}athanson related to
  minimal additive complements}, SIAM J. Discrete Math. \textbf{26} (2012),
  no.~4, 1532--1536. \MR{3022150}

\bibitem[KSY19]{KissSandorYangJCT19}
S\'{a}ndor~Z. Kiss, Csaba S\'{a}ndor, and Quan-Hui Yang, \emph{On minimal
  additive complements of integers}, J. Combin. Theory Ser. A \textbf{162}
  (2019), 344--353. \MR{3875615}

\bibitem[Kwo19]{Kwon}
Andrew Kwon, \emph{A note on minimal additive complements of integers},
  Discrete Math. \textbf{342} (2019), no.~7, 1912--1918. \MR{3937752}

\bibitem[Nat11]{NathansonAddNT4}
Melvyn~B. Nathanson, \emph{Problems in additive number theory, {IV}: {N}ets in
  groups and shortest length {$g$}-adic representations}, Int. J. Number Theory
  \textbf{7} (2011), no.~8, 1999--2017. \MR{2873139}

\end{thebibliography}
\end{document}